\newtheorem{theorem}{Theorem}[section]
\newtheorem{lemma}[theorem]{Lemma}
\newtheorem{remark}[theorem]{Remark}
\journal{Journal of \LaTeX\ Templates}
\begin{document}

\begin{frontmatter}

\title{Analysis of discrete energy-decay preserving schemes for Maxwell's equations in Cole-Cole dispersive medium}

\author[mymainaddress,mysecondaddress]{Guoyu Zhang}
\ead{guoyu\_zhang@imu.edu.cn}

\author[thirdaddress]{Ziming Dong}
\ead{dong88math@163.com}

\author[mymainaddress,mysecondaddress]{Baoli Yin\corref{mycorrespondingauthor}}
\cortext[mycorrespondingauthor]{Corresponding author}
\ead{baolimath@126.com}

\author[mymainaddress,mysecondaddress]{Yang Liu}
\ead{mathliuyang@imu.edu.cn}

\author[mymainaddress,mysecondaddress]{Hong Li}
\ead{smslh@imu.edu.cn}

\address[mymainaddress]{School of Mathematical Sciences, Inner Mongolia University, Hohhot 010021, China;}
\address[mysecondaddress]{Inner Mongolia Key Laboratory of Mathematical Modeling and Scientific Computing, Hohhot 010021, China;}
\address[thirdaddress]{Faculty of Mathematics, Baotou Teachers’ College, Baotou 014030, China;}

\begin{abstract}
This work investigates the design and analysis of energy-decay preserving numerical schemes for Maxwell's equations in a Cole-Cole (C-C) dispersive medium. A continuous energy-decay law is first established for the C-C model through a modified energy functional. Subsequently, a novel \(\theta\)-scheme is proposed for temporal discretization, which is rigorously proven to preserve a discrete energy dissipation property under the condition \(\theta \in [\frac{\alpha}{2}, \frac{1}{2}]\). The temporal convergence rate of the scheme is shown to be first-order for \(\theta \neq 0.5\) and second-order for \(\theta = 0.5\). Extensive numerical experiments validate the theoretical findings, including convergence tests and energy-decay comparisons. The proposed SFTR-\(\theta\) scheme demonstrates superior performance in maintaining monotonic energy decay compared to an alternative 2nd-order fractional backward difference formula, particularly in long-time simulations, highlighting its robustness and physical fidelity.\end{abstract}

\begin{keyword}
Maxwell's equations \sep Cole-Cole dispersive medium \sep energy-decay \sep SFTR-$\theta$ \sep $\theta$ method
\MSC[2010] 65N06, 65B99.
\end{keyword}

\end{frontmatter}

\section{Introduction}
The study of wave propagation in dispersive media such as water, soil, biological tissue, the ionosphere, plasma, optical fibers, and radar-absorbing materials has been an active area of engineering research since the early 1990s, motivated by the common characteristic of frequency-dependent permittivity or permeability in these materials \cite{biswas2017fractional,li2011developing,repo1996application,polk1995handbook, ionescu2017role, liu2023insight}. To characterize this frequency-domain behavior, various constitutive models have been developed, including the Drude model \cite{ziolkowski2001wave}, the Lorenz model \cite{smith2000negative}, and several anomalously dispersive models such as the Havriliak-Negami model \cite{havriliak1966complex,havriliak1967complex} and the Cole-Cole (C-C) model \cite{cole1941dispersion}.
Numerical simulation of wave propagation in such media has been addressed through multiple time-domain discretization techniques. These include finite-difference time-domain (FDTD) \cite{mustafa2014modeling, torres1996application,taflove2005computational,schuster1998fdtd,causley2011incorporating,rekanos2010auxiliary}, finite element time-domain (FETD) \cite{li2011developing,li2006analysis,jiao2001time,banks2009analysis}, spectral time-domain (STD) \cite{yang2021analysis,huang2019accurate}, and discontinuous Galerkin time-domain (DGTD) \cite{wang2021cg,lu2004discontinuous} methods. Complementary frequency-domain formulations have also been explored, as documented in \cite{mescia2014fractional,chakarothai2018novel,rekanos2012fdtd,rekanos2011fdtd,antonopoulos2017fdtd} and related references.
\par
The 2-D Maxwell's equations in a Cole-Cole dispersive medium \cite{li2011developing} can be stated as
\begin{eqnarray}\label{a1}
\epsilon_0 \epsilon_\infty \frac{\partial \boldsymbol E}{\partial t}(\boldsymbol x,t)&=&\nabla \times H(\boldsymbol x,t)-\frac{\partial \boldsymbol P}{\partial t}(\boldsymbol x,t),\\
\label{a2}
\mu_0 \frac{\partial H}{\partial t}(\boldsymbol x,t)&=&-\nabla \times \boldsymbol E(\boldsymbol x,t),\\
\label{a3}
\tau_0^\alpha \partial^\alpha_t \boldsymbol P(\boldsymbol x,t)+\boldsymbol P(\boldsymbol x,t)&=&\epsilon_0(\epsilon_s-\epsilon_\infty) \boldsymbol E(\boldsymbol x,t),
\end{eqnarray}
where $(\boldsymbol x,t)\in \Omega \times (0,T)$ for some $T>0$ and $\boldsymbol x=(x,y) \in \Omega=(a,b)\times (c,d)$.
The electric field \(\boldsymbol{E}(\boldsymbol{x},t) = (E_1, E_2)^\mathrm{T}\) and the magnetic field \(H(\boldsymbol{x},t)\) are the primary electromagnetic quantities in the model, while \(\boldsymbol{P}(\boldsymbol{x},t)\) denotes the time-domain polarization field. The material parameters include the free-space permittivity \(\epsilon_0\), the infinite-frequency permittivity \(\epsilon_\infty\), and the static permittivity \(\epsilon_s\), which satisfy the physical constraint \(\epsilon_s > \epsilon_\infty\). Additionally, \(\mu_0\) represents the free-space permeability, and \(\tau_0\) is the relaxation time.
The curl operators are defined as follows:
\[
\nabla \times H = \left( \frac{\partial H}{\partial y}, -\frac{\partial H}{\partial x} \right)^\mathrm{T}, \quad \nabla \times \boldsymbol{E} = \frac{\partial E_2}{\partial x} - \frac{\partial E_1}{\partial y}.
\]
To complete the initial-boundary value problem described by equations (\ref{a1})–(\ref{a3}), the following initial and boundary conditions are imposed:
\begin{equation}\begin{split}\label{a4}
\boldsymbol{E}(\boldsymbol{x},0) = \boldsymbol{E}_0(\boldsymbol{x}), \quad
H(\boldsymbol{x},0) = H_0(\boldsymbol{x}), \quad
\boldsymbol{P}(\boldsymbol{x},0) = \boldsymbol{0}, \quad \boldsymbol{x} \in \overline{\Omega},
\end{split}\end{equation}
and
\begin{equation}\begin{split}\label{a5}
\boldsymbol{n} \times \boldsymbol{E} = \boldsymbol{0} \quad \text{on } \partial\Omega \times [0,T],
\end{split}\end{equation}
where \(\boldsymbol{n}\) is the outward unit normal vector to the boundary \(\partial\Omega\).
The operator $\partial_t^\alpha$ represents the $\alpha$th fractional differential operator in Caputo sense, i.e.,
$\partial_t^\alpha u=\mathcal{I}^{1-\alpha} u_t$ where $\mathcal{I}^{\alpha}$ denotes the Riemann-Liouville fractional integral operator of order $\alpha$, defined by
\begin{equation}\label{a6}
(\mathcal{I}^\alpha u)(t)=\frac{1}{\Gamma(\alpha)}\int_{0}^{t}\frac{u(s)\mathrm{d}s}{(t-s)^{1-\alpha}}.
\end{equation}
\par
In their work \cite{li2011developing}, Li et al. established that the C-C model described by equations (\ref{a1})–(\ref{a5}) satisfies the following energy stability property:
\[
\mathcal{E}(t) \leq \mathcal{E}(0), \quad \text{for all } t \in [0,T],
\]
where the energy functional is defined as
\[
\mathcal{E}(t) = \epsilon_0(\epsilon_s - \epsilon_\infty)\left( \epsilon_0\epsilon_\infty\|\boldsymbol{E}(t)\|^2 + \mu_0\|H(t)\|^2 \right) + \|\boldsymbol{P}(t)\|^2.
\]
Subsequently, the authors of \cite{baoli2023discrete} provided, for the first time, a sufficient condition for determining when a second-order accurate discrete scheme can preserve such an energy stability property.
See also the related work \cite{xiao2025unconditionally}.
Nevertheless, it remains an important objective to define an appropriate energy functional that exhibits monotonic decay over time, and to develop numerical schemes that preserve this energy-decay behavior. Such an energy functional is particularly valuable in the design of adaptive time-stepping strategies, as the rate of energy-decay can serve as an effective criterion for dynamically adjusting the time step size.
In light of this, the contributions of this paper can be summarized as follows:
\begin{itemize}
\item A rigorous proof of the energy-decay law for the Cole-Cole model is established.
\item A $\theta$-scheme is designed and theoretically demonstrated to preserve the discrete energy-decay property.
\item Comprehensive numerical experiments are provided to validate the theoretical findings.
\end{itemize}
\par
This paper is organized as follows. 
Section 2 establishes the energy-decay law for the Cole-Cole model through the introduction of a modified energy functional. 
In Section 3, we propose a $\theta$-scheme and demonstrate its capability to preserve the discrete energy-decay property by proving several essential characteristics of the method. 
A comprehensive error analysis for the temporally semi-discrete scheme is presented in Section 4, while extensive numerical validation is provided in Section 5. 
The paper concludes with final remarks in Section 6.
\par
This section concludes with essential notation and assumptions. Let \( C \) denote a generic positive constant, independent of the mesh sizes \( h \) and \( \tau \), whose value may vary in different contexts.
For \( r \geq 0 \), \( H^r(\Omega) \) denotes the standard Sobolev space with norm \( \|\cdot\|_r \) and seminorm \( |\cdot|_r \). Here, \( \|\cdot\| \) abbreviates \( \|\cdot\|_0 \), and \( H^0(\Omega) \) coincides with \( L^2(\Omega) \). Define the space
\[
H^r(\mathrm{curl};\Omega) = \left\{ \boldsymbol{v} \in (H^r(\Omega))^2 : \nabla \times \boldsymbol{v} \in H^r(\Omega) \right\},
\quad
\|\boldsymbol{v}\|_{r,\mathrm{curl}} = \left( \|\boldsymbol{v}\|_r^2 + \|\nabla \times \boldsymbol{v}\|_r^2 \right)^{1/2},
\]
and its subspace
\[
H_0(\mathrm{curl};\Omega) = \left\{ \boldsymbol{v} \in H(\mathrm{curl};\Omega) : \boldsymbol{n} \times \boldsymbol{v} = 0 \text{ on } \partial\Omega \right\},
\]
where \( H(\mathrm{curl};\Omega) = H^0(\mathrm{curl};\Omega) \).
We assume the solution \( \boldsymbol{P} \) satisfies the regularity condition: for almost every \( \boldsymbol{x} \in \Omega \), \( \partial_t \boldsymbol{P}(\boldsymbol{x}, t) \in L^1(0,T) \) and \( \partial_t^\alpha \boldsymbol{P}(\boldsymbol{x}, t) \in C[0,T] \). Under this assumption, the following identities (see \cite[Theorem 2.14]{diethelm2010analysis}) and inequality (see \cite{alsaedi2015maximum,liao2021energy}) hold:
\begin{equation}\label{a7}\begin{split}
\partial_t \boldsymbol{P} = {}_{RL}\partial_t^{1-\alpha} \left( \mathcal{I}^{1-\alpha} \partial_t \boldsymbol{P} \right) = {}_{RL}\partial_t^{1-\alpha} \left( \partial_t^\alpha \boldsymbol{P} \right),
\end{split}
\end{equation}
and
\begin{equation}\label{a8}\begin{split}
\partial_t^\alpha \boldsymbol{P}(t) \cdot \left[ {}_{RL}\partial_t^{1-\alpha} \left( \partial_t^\alpha \boldsymbol{P} \right) \right](t) \geq \frac{1}{2} \left[ {}_{RL}\partial_t^{1-\alpha} \left( \partial_t^\alpha \boldsymbol{P} \right)^2 \right](t) + \frac{1}{2} \eta_\alpha(t) \left( \partial_t^\alpha \boldsymbol{P}(t) \right)^2,
\end{split}
\end{equation}
where \( \eta_\alpha(t) = \frac{t^{\alpha-1}}{\Gamma(\alpha)} \), and \( {}_{RL}\partial_t^{\alpha} u = \frac{\mathrm{d}}{\mathrm{d}t} \mathcal{I}^{1-\alpha} u \) denotes the Riemann–Liouville fractional derivative of order \( \alpha \).

\section{Continuous energy dissipation}
The weak formulation of (\ref{a1})-(\ref{a3}) can be formulated as:
Find $\boldsymbol E \in C(0,T; H_0({\rm curl};\Omega))\cap C^1(0,T;(L^2(\Omega))^2)$,
$H\in C^1(0,T;L^2(\Omega))$ and $\boldsymbol P \in C^1(0,T;(L^2(\Omega))^2)$ so that
\begin{eqnarray}\label{b1}
\epsilon_0 \epsilon_\infty \bigg(\frac{\partial \boldsymbol E}{\partial t},\boldsymbol\chi\bigg)
+\bigg(\frac{\partial \boldsymbol P}{\partial t},\boldsymbol\chi\bigg)
-(H,\nabla \times \boldsymbol\chi)&=&0,
\quad \forall \boldsymbol\chi \in H_0({\rm curl};\Omega),
\\
\label{b2}
\mu_0 \bigg(\frac{\partial H}{\partial t},\phi\bigg)
+(\nabla \times \boldsymbol E,\phi)&=&0,
\quad \forall \phi \in L^2(\Omega),\\
\label{b3}
\tau_0^\alpha \big(\partial^\alpha_t \boldsymbol P,\boldsymbol\psi\big)+\big(\boldsymbol P,\boldsymbol\psi\big)
-\epsilon_0(\epsilon_s-\epsilon_\infty) \big(\boldsymbol E,\boldsymbol\psi\big)&=&0,
\quad \forall \boldsymbol\psi \in (L^2(\Omega))^2.
\end{eqnarray}
For a better explanation of the definition of compatible energy, we shall present first the energy dissipation law of the classical Debye model (i.e., by setting $\alpha=1$ in (\ref{a3}) or (\ref{b3})).
\begin{lemma}(Energy dissipation of Debye model of order 1)\label{lem.1}
Let $\alpha=1$ in (\ref{b3}).
There holds
\begin{equation}\label{b4}\begin{split}
\frac{\mathrm{d}}{\mathrm{d}t}\mathcal{E}(t)+\tau_0\|\partial_t \boldsymbol P\|^2 \leq 0,
\end{split}
\end{equation}
where $\mathcal{E}(t):=\tau_0\int_{0}^{t}\|\partial_t \boldsymbol P(s)\|^2\mathrm{d}s
+\|\boldsymbol P\|^2
+\epsilon_0(\epsilon_s-\epsilon_\infty)\big(\epsilon_0\epsilon_\infty\|\boldsymbol E\|^2+\mu_0\|H\|^2\big)$ and therefore,
\begin{equation}\label{b4.0}
\mathcal{E}(t_1)\leq \mathcal{E}(t_2),\quad\text{for } t_1\geq t_2.
\end{equation}
\end{lemma}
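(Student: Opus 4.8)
The plan is to test the weak system \eqref{b1}--\eqref{b3} with the natural ``energy'' choices and then combine the resulting identities so that the indefinite cross terms cancel. First I would differentiate $\mathcal{E}(t)$ in time: the fundamental theorem of calculus applied to the history integral produces $\tau_0\|\partial_t\boldsymbol P\|^2$, while the remaining contributions give $2(\boldsymbol P,\partial_t\boldsymbol P)$ together with $2\epsilon_0(\epsilon_s-\epsilon_\infty)\bigl(\epsilon_0\epsilon_\infty(\boldsymbol E,\partial_t\boldsymbol E)+\mu_0(H,\partial_t H)\bigr)$. The regularity built into the weak formulation ($\boldsymbol E\in C^1(0,T;(L^2(\Omega))^2)$, $H\in C^1(0,T;L^2(\Omega))$, $\boldsymbol P\in C^1(0,T;(L^2(\Omega))^2)$) is exactly what justifies differentiating these squared norms and the integral term.

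Next I would select the test functions $\boldsymbol\chi=\boldsymbol E$ in \eqref{b1}, $\phi=H$ in \eqref{b2}, and $\boldsymbol\psi=\partial_t\boldsymbol P$ in \eqref{b3}, noting that with $\alpha=1$ one has $\partial_t^\alpha\boldsymbol P=\partial_t\boldsymbol P$. Each choice is admissible, since $\boldsymbol E(t)\in H_0(\mathrm{curl};\Omega)$, $H(t)\in L^2(\Omega)$, and $\partial_t\boldsymbol P(t)\in (L^2(\Omega))^2$. Adding the first two identities eliminates the curl coupling, because $(H,\nabla\times\boldsymbol E)=(\nabla\times\boldsymbol E,H)$, leaving $\epsilon_0\epsilon_\infty(\boldsymbol E,\partial_t\boldsymbol E)+\mu_0(H,\partial_t H)=-(\partial_t\boldsymbol P,\boldsymbol E)$. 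The third identity yields the polarization relation $\epsilon_0(\epsilon_s-\epsilon_\infty)(\boldsymbol E,\partial_t\boldsymbol P)=\tau_0\|\partial_t\boldsymbol P\|^2+(\boldsymbol P,\partial_t\boldsymbol P)$.

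I then substitute both relations into the expression for $\tfrac{\mathrm d}{\mathrm dt}\mathcal{E}(t)$. The field-energy rate becomes $-2\epsilon_0(\epsilon_s-\epsilon_\infty)(\partial_t\boldsymbol P,\boldsymbol E)$, and replacing this via the polarization relation cancels the two copies of $(\boldsymbol P,\partial_t\boldsymbol P)$ and converts the storage term into $-2\tau_0\|\partial_t\boldsymbol P\|^2$. Collecting everything gives the clean identity $\tfrac{\mathrm d}{\mathrm dt}\mathcal{E}(t)=-\tau_0\|\partial_t\boldsymbol P\|^2$, which is the equality form of \eqref{b4}; the stated inequality is then immediate. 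Finally, integrating $\tfrac{\mathrm d}{\mathrm dt}\mathcal{E}\le 0$ over $[t_2,t_1]$ with $t_1\ge t_2$ delivers the monotonicity \eqref{b4.0}.

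As for the main obstacle: the Debye case $\alpha=1$ is essentially a bookkeeping exercise with no genuine analytic difficulty, and I expect it to serve as a warm-up motivating the definition of the ``compatible'' energy $\mathcal{E}$. The one point that must be handled correctly is the test-function choice $\boldsymbol\psi=\partial_t\boldsymbol P$ (rather than $\boldsymbol\psi=\boldsymbol P$), since it is this selection that simultaneously generates the dissipation term $\tau_0\|\partial_t\boldsymbol P\|^2$ and makes the cross terms telescope. The real technical difficulty is deferred to the genuinely fractional regime $0<\alpha<1$, where the ordinary product rule exploited here is unavailable; there one must instead invoke the identity \eqref{a7} and the fractional positivity inequality \eqref{a8}, and I anticipate that estimate to be the analytic heart of the paper's main energy-decay theorem, even though it plays no role in the present lemma.
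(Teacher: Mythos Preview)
Your proposal is correct and follows essentially the same approach as the paper: test \eqref{b1} with $\boldsymbol\chi=\boldsymbol E$, \eqref{b2} with $\phi=H$, and \eqref{b3} with $\boldsymbol\psi=\partial_t\boldsymbol P$, then combine so that the curl and $(\boldsymbol E,\partial_t\boldsymbol P)$ terms cancel. The only cosmetic difference is that the paper assembles the energy identity directly from the tested equations, whereas you first expand $\tfrac{\mathrm d}{\mathrm dt}\mathcal{E}(t)$ and then substitute; both routes yield the equality $\tfrac{\mathrm d}{\mathrm dt}\mathcal{E}(t)+\tau_0\|\partial_t\boldsymbol P\|^2=0$, from which \eqref{b4} and \eqref{b4.0} are immediate.
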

\begin{proof}
Let $\boldsymbol \chi=\boldsymbol E$ in (\ref{b1}) and $\phi=H$ in (\ref{b2}), and add these two resultants together to obtain
\begin{equation}\label{b4.1}\begin{split}
\frac{1}{2}\epsilon_0\epsilon_\infty\frac{\mathrm{d}}{\mathrm{d}t}\|\boldsymbol E\|^2
+\frac{1}{2}\mu_0\frac{\mathrm{d}}{\mathrm{d}t}\|H\|^2
+\bigg(\frac{\partial \boldsymbol P}{\partial t}, \boldsymbol E\bigg)=0.
\end{split}
\end{equation}
Multiplying both sides of (\ref{b4.1}) by $\epsilon_0(\epsilon_s-\epsilon_\infty)$, and adding the resultant to the equation (\ref{b3}) after replacing $\boldsymbol \psi$ with $\frac{\partial \boldsymbol P}{\partial t}$, one can readily arrive at (\ref{b4}).
Then, (\ref{b4.0}) follows for $\frac{\mathrm{d}}{\mathrm{d}t}\mathcal{E}(t) \leq 0$ by (\ref{b4}), which completes the proof of the lemma.
\end{proof}
\begin{lemma}(Energy dissipation of Cole-Cole model)\label{lem.2}
For arbitrary $\alpha \in (0,1)$, there holds
\begin{equation}\label{b5}\begin{split}
\frac{\rm d}{{\rm d}t}\mathcal{E}_\alpha(t)+\tau_0^\alpha \eta_\alpha(t)\|\partial_t^\alpha \boldsymbol P\|^2\leq 0,
\end{split}
\end{equation}
where $\mathcal{E}_\alpha(t):=\tau_0^\alpha \mathcal{I}^\alpha\|\partial_t^\alpha \boldsymbol P\|^2
+
\|\boldsymbol P\|^2
+\epsilon_0(\epsilon_s-\epsilon_\infty)\big(\epsilon_0\epsilon_\infty\|\boldsymbol E\|^2+\mu_0\|H\|^2\big)$ and therefore,
\begin{equation}\label{b5.1}
\mathcal{E}_\alpha(t_1)\leq \mathcal{E}_\alpha(t_2),\quad\text{for }t_1\geq t_2.
\end{equation}
\end{lemma}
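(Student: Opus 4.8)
The plan is to replay the energy computation of Lemma~\ref{lem.1} verbatim for the electromagnetic part, and to absorb the one genuinely fractional term using the auxiliary facts (\ref{a7})--(\ref{a8}). First I would set $\boldsymbol\chi=\boldsymbol E$ in (\ref{b1}) and $\phi=H$ in (\ref{b2}) and add the two results; since $(H,\nabla\times\boldsymbol E)=(\nabla\times\boldsymbol E,H)$ the curl terms cancel and I recover (\ref{b4.1}) unchanged. Multiplying that identity by $\epsilon_0(\epsilon_s-\epsilon_\infty)$ and then testing (\ref{b3}) with $\boldsymbol\psi=\frac{\partial\boldsymbol P}{\partial t}$, the two cross terms $\pm\epsilon_0(\epsilon_s-\epsilon_\infty)\big(\frac{\partial\boldsymbol P}{\partial t},\boldsymbol E\big)$ cancel upon addition, while $\big(\boldsymbol P,\frac{\partial\boldsymbol P}{\partial t}\big)=\frac12\frac{\mathrm d}{\mathrm dt}\|\boldsymbol P\|^2$. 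This leaves the single reduced identity
\[
\frac12\frac{\mathrm d}{\mathrm dt}\Big[\epsilon_0(\epsilon_s-\epsilon_\infty)\big(\epsilon_0\epsilon_\infty\|\boldsymbol E\|^2+\mu_0\|H\|^2\big)+\|\boldsymbol P\|^2\Big]+\tau_0^\alpha\big(\partial_t^\alpha\boldsymbol P,\tfrac{\partial\boldsymbol P}{\partial t}\big)=0,
\]
so that everything hinges on a lower bound for the fractional inner product.

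The crux, and the only place where $\alpha\in(0,1)$ enters nontrivially, is this last term. Here I would invoke the identity (\ref{a7}) to write $\frac{\partial\boldsymbol P}{\partial t}={}_{RL}\partial_t^{1-\alpha}(\partial_t^\alpha\boldsymbol P)$, so that the integrand of the $L^2$ inner product becomes $\partial_t^\alpha\boldsymbol P\cdot\big[{}_{RL}\partial_t^{1-\alpha}(\partial_t^\alpha\boldsymbol P)\big]$, which is precisely the left-hand side of the pointwise inequality (\ref{a8}). Applying (\ref{a8}) for almost every $\boldsymbol x\in\Omega$ (legitimate under the stated regularity $\partial_t^\alpha\boldsymbol P(\boldsymbol x,\cdot)\in C[0,T]$) and integrating over $\Omega$ yields
\[
\big(\partial_t^\alpha\boldsymbol P,\tfrac{\partial\boldsymbol P}{\partial t}\big)\ \geq\ \tfrac12\int_\Omega\big[{}_{RL}\partial_t^{1-\alpha}(\partial_t^\alpha\boldsymbol P)^2\big]\,\mathrm d\boldsymbol x+\tfrac12\eta_\alpha(t)\|\partial_t^\alpha\boldsymbol P\|^2.
\]

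To recognise the first term as a clean time derivative I would use the stated definition ${}_{RL}\partial_t^{1-\alpha}=\frac{\mathrm d}{\mathrm dt}\mathcal I^{\alpha}$ and commute the temporal operator $\frac{\mathrm d}{\mathrm dt}\mathcal I^\alpha$ with the spatial integral $\int_\Omega\,\mathrm d\boldsymbol x$ (Fubini, justified by the assumed regularity), giving $\int_\Omega[{}_{RL}\partial_t^{1-\alpha}(\partial_t^\alpha\boldsymbol P)^2]\,\mathrm d\boldsymbol x=\frac{\mathrm d}{\mathrm dt}\mathcal I^\alpha\|\partial_t^\alpha\boldsymbol P\|^2$. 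Substituting this lower bound into the reduced identity and multiplying by $2$ produces exactly (\ref{b5}). Finally, since $\tau_0^\alpha>0$ and $\eta_\alpha(t)=t^{\alpha-1}/\Gamma(\alpha)\ge 0$ on $(0,T]$, the dissipation term is nonnegative, whence $\frac{\mathrm d}{\mathrm dt}\mathcal E_\alpha\le 0$; integrating over $[t_2,t_1]$ with $t_1\ge t_2$ then gives the monotonicity (\ref{b5.1}).

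I expect the main obstacle to be the bookkeeping in passing the scalar, $t$-dependent inequality (\ref{a8}) to the vector- and $\boldsymbol x$-dependent setting: one must verify that it applies pointwise (componentwise) in $\boldsymbol x$ and may then be integrated, and that $\frac{\mathrm d}{\mathrm dt}\mathcal I^\alpha$ genuinely commutes with $\int_\Omega$ under the given regularity. Once those two technical points are secured, the argument is an exact transcription of the Debye computation in Lemma~\ref{lem.1}, with the term $\tau_0\|\partial_t\boldsymbol P\|^2$ replaced by the fractional dissipation $\tau_0^\alpha\eta_\alpha(t)\|\partial_t^\alpha\boldsymbol P\|^2$ furnished by (\ref{a8}).
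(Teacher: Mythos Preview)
Your proposal is correct and follows essentially the same approach as the paper: derive the reduced identity (\ref{b6}) by testing (\ref{b1})--(\ref{b2}) with $\boldsymbol E,H$, testing (\ref{b3}) with $\partial_t\boldsymbol P$, and combining; then invoke (\ref{a7})--(\ref{a8}) to bound the fractional inner product and recognise the Riemann--Liouville term as $\frac{\mathrm d}{\mathrm dt}\mathcal I^\alpha\|\partial_t^\alpha\boldsymbol P\|^2$. The paper's own proof is terser---it simply cites (\ref{a7}), (\ref{a8}), (\ref{b6}) without spelling out the pointwise-to-$L^2$ passage or the commutation of $\frac{\mathrm d}{\mathrm dt}\mathcal I^\alpha$ with $\int_\Omega$---so your more explicit bookkeeping is, if anything, an improvement in exposition rather than a different argument.
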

\begin{proof}
Replacing $\boldsymbol \psi$ by $\frac{\partial \boldsymbol P}{\partial t}$ in (\ref{b3}), multiplying both sides of (\ref{b4.1}) by $\epsilon_0(\epsilon_s-\epsilon_\infty)$, and adding these two resultants together, one gets
\begin{equation}\label{b6}
\tau_0^\alpha \big(\partial^\alpha_t \boldsymbol P,\partial_t \boldsymbol P\big)
+\frac{1}{2}\frac{\rm d}{{\rm d}t}\|\boldsymbol P\|^2
+\frac{1}{2}\epsilon_0^2\epsilon_\infty(\epsilon_s-\epsilon_\infty)\frac{\rm d}{{\rm d}t}\|\boldsymbol E\|^2
+\frac{1}{2}\mu_0\epsilon_0(\epsilon_s-\epsilon_\infty)\frac{\rm d}{{\rm d}t}\|H\|^2
=0.
\end{equation}
Then, (\ref{b5}) follows in accordance to (\ref{a7}), (\ref{a8}) and (\ref{b6}), and (\ref{b5.1}) is due to the monotonicity of the function $\mathcal{E}_\alpha(t)$ by (\ref{b5}).
\end{proof}
\begin{remark}\label{rem.1}
Clearly, the term $\mathcal{I}^\alpha u$ tends to $\int_{0}^{t}u(s)\mathrm{d}s$ and $\eta_\alpha(t)=\frac{t^{\alpha-1}}{\Gamma(\alpha)} \to 1$ when $\alpha \to 1$, and therefore, (\ref{b5}) extends (\ref{b4}) to the fractional case naturally.
The energy $\mathcal{E}_\alpha(t)$ is thus regarded as being compatible with the original one $\mathcal{E}(t)$.
\end{remark}
\section{Discrete energy dissipation}
In temporal direction we approximate the Maxwell's equations based on a uniform mesh with grid points:
$0=t_0<t_1<\cdots<t_N=T$ with $t_n=n\tau$ and $\tau=\frac{T}{N}$.
Let $u^n:=u(t_n)$ for brevity. Introduce the following symbols
\begin{equation}\label{c1}\begin{split}
\partial_\tau^{n-\theta} u:=\frac{u^n-u^{n-1}}{\tau},\quad
\overline{u}^{n-\theta}:=(1-\theta)u^n+\theta u^{n-1}.
\end{split}
\end{equation}
as approximation to $u_t$ and $u$ at time $t_{n-\theta}$, respectively.
To approximate the term $\partial^\alpha_t u(t_{n-\theta})$, we adopt the shifted fractional trapezoidal rule \cite{yin2020necessity,yin2021efficient}, i.e., let
\begin{equation}\label{c2}\begin{split}
\partial_\tau^\alpha u^{n-\theta}:=\tau^{-\alpha}\sum_{k=1}^{n}\omega^{n-k}(u^k-u^0),
\end{split}
\end{equation}
where the weights $\omega_k$'s, which generally depend on $\alpha$ and $\theta$, are generated by the function $\omega(\zeta)$:
\begin{equation}\label{c3}
\omega(\zeta)=\bigg[\frac{1-\zeta}{\frac{1}{2}(1+\zeta)+\frac{\theta}{\alpha}(1-\zeta)}\bigg]^\alpha,\quad 0<\theta\leq \frac{1}{2}.
\end{equation}
We also introduce a new sequence $\{\varpi_k\}_{k=0}^\infty$ aiming to facilitate the following analysis, i.e., define
\begin{equation}\label{c3.0}
\varpi(\zeta)=\sum_{k=0}^{\infty}\varpi_k \zeta^k:=\frac{1-\zeta}{\omega(\zeta)}
=(1-\zeta)^{1-\alpha}\bigg[\frac{1}{2}(1+\zeta)+\frac{\theta}{\alpha}(1-\zeta)\bigg]^\alpha.
\end{equation}
\begin{lemma}\label{lem.3}
For $\alpha \in (0,1)$ and $\theta \in (0,\frac{1}{2}]$, the sequence $\{\varpi_k\}_{k=0}^\infty$ fulfills the properties that {\rm(i)} $\varpi_k=O(k^{\alpha-2})$ and {\rm(ii)} $\tau^{-(1-\alpha)}e^{(\frac{1}{2}-\theta)\tau}\varpi(e^{-\tau})=1+O(\tau^2)$.
\end{lemma}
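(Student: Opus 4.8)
The plan is to treat the two claims with rather different tools: (i) is a statement about the asymptotic size of the Taylor coefficients of $\varpi(\zeta)$ near its dominant singularity at $\zeta=1$, whereas (ii) is a local expansion of $\varpi(e^{-\tau})$ as $\tau\to 0^+$. For both I will exploit the factorization
\[
\varpi(\zeta)=(1-\zeta)^{1-\alpha}\,g(\zeta),\qquad g(\zeta):=\Big[\tfrac12(1+\zeta)+\tfrac{\theta}{\alpha}(1-\zeta)\Big]^{\alpha},
\]
in which the first factor carries the algebraic branch point and $g$ is a smooth correction factor with $g(1)=1$.

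For (i) I would first read off the coefficients of the branch factor from the generalized binomial theorem, writing $(1-\zeta)^{1-\alpha}=\sum_k c_k\zeta^k$ with $c_k=(-1)^k\binom{1-\alpha}{k}=\dfrac{\Gamma(k-1+\alpha)}{\Gamma(\alpha-1)\,\Gamma(k+1)}$, and invoke the gamma-ratio asymptotic $\Gamma(k+a)/\Gamma(k+b)\sim k^{a-b}$ to conclude $c_k=O(k^{\alpha-2})$. It then remains to show that $g$ contributes only a geometrically small convolution tail. The affine polynomial inside the bracket equals $1$ at $\zeta=1$ and, as a short case analysis in $\theta\lessgtr\alpha/2$ shows (with the bracket degenerating to the constant $1$ when $\theta=\alpha/2$), its unique root lies strictly outside the closed unit disk for every $\theta\in(0,\tfrac12]$. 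Hence $g$ is analytic on a disk of radius larger than $1$, so $g_k=O(\sigma^k)$ for some $\sigma\in(0,1)$. Writing $\varpi_k=\sum_{j=0}^{k}c_{k-j}g_j$ and splitting the sum at $j=k/2$—bounding $c_{k-j}=O(k^{\alpha-2})$ on the first half and using $\sum_{j>k/2}|g_j|=O(\sigma^{k/2})$ on the second—then yields $\varpi_k=O(k^{\alpha-2})$.

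For (ii) I would substitute $\zeta=e^{-\tau}$ and pass to logarithms, writing
\[
\tau^{-(1-\alpha)}e^{(\frac12-\theta)\tau}\varpi(e^{-\tau})=\exp\!\big\{E(\tau)\big\},\qquad E(\tau):=\big(\tfrac12-\theta\big)\tau+(1-\alpha)\ln\tfrac{1-e^{-\tau}}{\tau}+\alpha\ln B(\tau),
\]
where $B(\tau)$ denotes the bracket evaluated at $\zeta=e^{-\tau}$. Since $\tfrac{1-e^{-\tau}}{\tau}\to 1$ and $B(0)=1$, both logarithms are analytic at $\tau=0$, so $E$ is analytic with $E(0)=0$. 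The only thing that must be checked is that $E'(0)=0$: the elementary expansions $\ln\tfrac{1-e^{-\tau}}{\tau}=-\tfrac{\tau}{2}+O(\tau^2)$ and $\ln B(\tau)=\big(\tfrac{\theta}{\alpha}-\tfrac12\big)\tau+O(\tau^2)$ give the first-order coefficient $\big(\tfrac12-\theta\big)-\tfrac{1-\alpha}{2}+\alpha\big(\tfrac{\theta}{\alpha}-\tfrac12\big)=0$. Consequently $E(\tau)=O(\tau^2)$ and $\exp\{E(\tau)\}=1+O(\tau^2)$, which is the assertion.

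The genuinely delicate point is the root-location step in (i): one must confirm that $\zeta=1$ is the \emph{dominant} singularity of $\varpi$ uniformly over the admissible parameter range, and in particular handle the degenerate case $\theta=\alpha/2$ separately. Everything in (ii) is routine Taylor bookkeeping; the one feature worth emphasizing is that the prefactor $e^{(\frac12-\theta)\tau}$ is engineered precisely so as to cancel the $O(\tau)$ contribution coming from the $-\tfrac{\tau}{2}$ in $\ln\tfrac{1-e^{-\tau}}{\tau}$ together with the first-order drift $\big(\tfrac{\theta}{\alpha}-\tfrac12\big)\tau$ of the bracket, which is exactly what upgrades the consistency from first to second order.
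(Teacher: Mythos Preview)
Your proof is correct and follows essentially the same approach as the paper: for (i) you both factor $\varpi(\zeta)=(1-\zeta)^{1-\alpha}g(\zeta)$, locate the unique zero of the bracket strictly outside the closed unit disk so that $g$ is analytic there with geometrically decaying coefficients, and conclude that the decay rate of $\varpi_k$ is dictated by the binomial coefficients of $(1-\zeta)^{1-\alpha}$; for (ii) both arguments amount to Taylor-expanding at $\tau=0$, your logarithmic bookkeeping being an explicit realization of what the paper calls ``simple calculations which are omitted.'' If anything, you are slightly more careful than the paper, since you note the degenerate case $\theta=\alpha/2$ (where the bracket is identically $1$ and the paper's formula for $\zeta_0$ is undefined) and you spell out the convolution split that justifies why the analytic factor does not alter the $O(k^{\alpha-2})$ rate.
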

\begin{proof}
Let $\nu(\zeta):=\big[\frac{1}{2}(1+\zeta)+\frac{\theta}{\alpha}(1-\zeta)\big]^\alpha$ and thus $\varpi(\zeta)=(1-\zeta)^{1-\alpha}\nu(\zeta)$.
The only singular point $\zeta_0$ of $\nu(\zeta)$ is
\[
\zeta_0=1+\frac{1}{\frac{\theta}{\alpha}-\frac{1}{2}},
\]
satisfying that $|\zeta_0|>1$ due to the fact $\frac{\theta}{\alpha}>0$.
Then, $\nu(\zeta)$ is analytic on a closed unit disk in $\mathbb{C}$, indicating that the weights of the expansion of $\nu(\zeta)$ decay faster than $O(k^{-\ell})$ for any positive integer $\ell$.
The decay rate of the weights $\varpi_k$, as a discrete convolution of two sequences formed by the coefficients of expansion of $(1-\zeta)^{1-\alpha}$ and $\nu(\zeta)$ respectively, is totally determined by $(1-\zeta)^{1-\alpha}$, which confirms (i).
To prove (ii), replace $e^{-\tau}$ and $e^{(\frac{1}{2}-\theta)\tau}$ with their Taylor expansions with respect to $\tau$, followed by some simple calculations which are omitted here.
\end{proof}
\begin{remark}\label{rem.2}
The properties (i) and (ii) in the above lemma essentially imply the formula $\tau^{\alpha-1}\sum_{k=0}^{n}\varpi_k u^{n-k}$ can approximate $\partial_t^{1-\alpha}u$ at time $t_{n-\frac{1}{2}+\theta}$ with second-order accuracy.
We refer to the shifted convolution quadrature in \cite{liu2019unified} for more information.
\end{remark}
\begin{lemma}\label{lem.4}
For $\theta \in [\frac{\alpha}{2},\frac{1}{2}]$, the weights $\varpi_k$ satisfy {\rm(i)} $\varpi_0>0$, {\rm(ii)} $\varpi_k\leq 0$, $k\geq 1$.
\end{lemma}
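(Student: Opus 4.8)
The plan is to reduce the statement to a monotonicity property of a single auxiliary power series. First I would rewrite the generating function in \eqref{c3.0} as $\varpi(\zeta)=A^{\alpha}(1-\zeta)^{1-\alpha}(1-c\zeta)^{\alpha}$, where $A=\tfrac12+\tfrac{\theta}{\alpha}$ and $c=\frac{\theta/\alpha-1/2}{\theta/\alpha+1/2}$; the hypothesis $\theta\in[\tfrac{\alpha}{2},\tfrac12]$ is exactly what guarantees $c\in[0,1)$ (at $\theta=\alpha/2$ one has $c=0$, and $c$ increases to $\frac{1-\alpha}{1+\alpha}<1$ at $\theta=\tfrac12$). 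Factoring one power of $(1-\zeta)$ out, write $\varpi(\zeta)=A^{\alpha}(1-\zeta)\phi(\zeta)$ with $\phi(\zeta)=\big(\frac{1-c\zeta}{1-\zeta}\big)^{\alpha}=\sum_{k\ge0}p_k\zeta^k$. Then $\varpi_0=A^{\alpha}p_0=A^{\alpha}>0$, which settles (i), while $\varpi_k=A^{\alpha}(p_k-p_{k-1})$ for $k\ge1$, so (ii) is equivalent to showing that the sequence $\{p_k\}$ is nonincreasing.

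I would establish this in two independent steps. For nonnegativity of $p_k$, note that $\log\phi=\alpha\log\frac{1-c\zeta}{1-\zeta}=\alpha\sum_{k\ge1}\frac{1-c^{k}}{k}\zeta^{k}$ has only nonnegative coefficients because $c\in[0,1)$; since $\exp$ preserves nonnegativity of the coefficients of a series with zero constant term, $\phi=\exp(\log\phi)$ has $p_k\ge0$ for all $k$. For the recurrence, observe that $\phi$ solves the first-order ODE $\phi'(\zeta)(1-\zeta)(1-c\zeta)=\alpha(1-c)\phi(\zeta)$, obtained from $\phi'/\phi=\alpha\psi'/\psi$ with $\psi=\frac{1-c\zeta}{1-\zeta}$. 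Matching coefficients of $\zeta^{k}$ yields $(k+1)p_{k+1}=[(1+c)k+\alpha(1-c)]p_k-c(k-1)p_{k-1}$.

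The crux is to recast this recurrence so that monotonicity follows by a clean induction. Setting $d_k:=p_{k-1}-p_k$, a short rearrangement of the recurrence collapses the constant coefficient via the identity $1-c-\alpha(1-c)=(1-c)(1-\alpha)$, giving $(k+1)d_{k+1}=(1-c)(1-\alpha)p_k+c(k-1)d_k$. Every factor on the right is now manifestly nonnegative (using $\alpha,c\in[0,1)$, $k\ge1$, $p_k\ge0$, and the inductive hypothesis $d_k\ge0$), so $d_{k+1}\ge0$ propagates once the base case $d_1=1-\alpha(1-c)>0$ is checked directly. Hence $\{p_k\}$ is nonincreasing and $\varpi_k=-A^{\alpha}d_k\le0$ for $k\ge1$. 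The main obstacle is precisely this reorganization: the naive route of convolving the coefficient sequences of $(1-\zeta)^{1-\alpha}$ and $(1-c\zeta)^{\alpha}$ fails, since the sign pattern $(+,-,-,\dots)$ is not preserved under convolution, so the work lies in isolating the monotonicity statement and finding the difference form of the recurrence in which the algebraic cancellation $1-c-\alpha(1-c)=(1-c)(1-\alpha)$ makes all contributions nonnegative.
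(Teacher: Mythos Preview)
Your proof is correct and, frankly, cleaner than the paper's. Both arguments begin with a three-term recurrence derived from the ODE for the generating function, but from that point the strategies diverge substantially.

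The paper works directly with the $\varpi_k$ recurrence \eqref{c3.0.3}. It checks $\varpi_1,\varpi_2,\varpi_3<0$ by hand, then tries to control the ratio $\varpi_{k-1}/\varpi_{k-2}$ inductively via an auxiliary function $\rho(\alpha,\theta,k)$. The inductive step, however, reduces to an inequality $\widetilde\rho\ge\rho$ that the authors verify only numerically (Fig.~\ref{C1}), so the published argument is not fully analytic.

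Your route---factoring out $(1-\zeta)$ to pass to $\phi(\zeta)=\big(\tfrac{1-c\zeta}{1-\zeta}\big)^{\alpha}$, so that the sign condition becomes monotonicity of $\{p_k\}$---is the key simplification. Two things make it work: (a) the reparametrization $c\in[0,1)$ turns the hypothesis $\theta\in[\tfrac{\alpha}{2},\tfrac12]$ into a clean positivity condition, and (b) rewriting the recurrence in the differences $d_k=p_{k-1}-p_k$ produces the cancellation $1-c-\alpha(1-c)=(1-c)(1-\alpha)$, after which every coefficient in $(k+1)d_{k+1}=(1-c)(1-\alpha)p_k+c(k-1)d_k$ is manifestly nonnegative. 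Combined with the nonnegativity of $p_k$ (which you get for free from the nonnegative logarithm coefficients), the induction closes with no auxiliary estimate and no numerical step. Your argument is therefore both more elementary and strictly more rigorous than the paper's.
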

\begin{proof}
We present first a recursive formula for calculating $\varpi_k$.
Let $\zeta=0$ in (\ref{c3.0}) and thus $\varpi_0=\varpi(0)=(\frac{1}{2}+\frac{\theta}{\alpha})^\alpha>0$ which confirms (i).
Take the first derivative of $\varpi(\zeta)$ with respect to $\zeta$ and multiply both sides of the resultant by $(1-\zeta)\big[\frac{1}{2}(1+\zeta)+\frac{\theta}{\alpha}(1-\zeta)\big]$, to obtain
\begin{equation}\label{c3.0.1}
(1-\zeta)\bigg[\frac{1}{2}(1+\zeta)+\frac{\theta}{\alpha}(1-\zeta)\bigg]\varpi'(\zeta)
=
\bigg[\bigg(\frac{\theta}{\alpha}-\frac{1}{2}\bigg)\zeta+\alpha-\frac{\theta}{\alpha}-\frac{1}{2}\bigg]\varpi(\zeta).
\end{equation}
Considering that $\varpi(\zeta)=\sum_{k=0}^{\infty}\varpi_k\zeta^k$ and $\varpi'(\zeta)=\sum_{k=1}^{\infty}k\varpi_k\zeta^{k-1}$, by comparing the coefficient of $\zeta^k$ of the expansion of both sides of (\ref{c3.0.1}), we have
\begin{equation}\label{c3.0.2}
\varpi_1=\frac{\alpha-\frac{\theta}{\alpha}-\frac{1}{2}}{\frac{\theta}{\alpha}+\frac{1}{2}}\varpi_0
\leq \frac{\alpha-1}{\frac{\theta}{\alpha}+\frac{1}{2}}\varpi_0<0,\quad\text{since $\theta\geq \frac{\alpha}{2}$},
\end{equation}
and for $k\geq 2$,
\begin{equation}\label{c3.0.3}
\varpi_k=\frac{1}{\big(\frac{\theta}{\alpha}+\frac{1}{2}\big)k}
\bigg\{\bigg[\frac{\theta}{\alpha}(2k-3)+\alpha-\frac{1}{2}\bigg]\varpi_{k-1}
+\bigg(\frac{\theta}{\alpha}-\frac{1}{2}\bigg)(3-k)\varpi_{k-2}\bigg\}.
\end{equation}
\par
Next we prove that if $k\geq 4$, there holds
\begin{equation}\label{c3.0.4}
\frac{\varpi_{k-1}}{\varpi_{k-2}}\geq \rho_{k-1}
>\frac{(\frac{\theta}{\alpha}-\frac{1}{2})(k-3)}{\frac{\theta}{\alpha}(2k-3)+\alpha-\frac{1}{2}},
\end{equation}
where $\rho(\alpha,\theta,k)=\frac{(\frac{\theta}{\alpha}-\frac{1}{2})(k-2)}{\frac{\theta}{\alpha}(2k-1)+\alpha-\frac{1}{2}}\frac{4\theta}{2\theta+\alpha}(1+\frac{1}{k})$.
Note that the correctness of the second inequality of (\ref{c3.0.4}) is obvious.
By letting $k=3$ in (\ref{c3.0.3}), we have $\frac{\varpi_3}{\varpi_2}=(\frac{3\theta}{\alpha}+\alpha-\frac{1}{2})/(\frac{3\theta}{\alpha}+\frac{3}{2})$ and
\begin{equation}\label{c3.0.5}\begin{split}
\frac{\varpi_3}{\varpi_2}-\rho_3 &=
\frac{4\alpha^4-4\alpha^3+(32\theta+1)\alpha^2+28\theta^2}{3(\alpha+2\theta)(2\alpha^2-\alpha+10\theta)}
\\&\geq
\frac{4\alpha^4+12\alpha^3+8\alpha^2}{3(\alpha+2\theta)(2\alpha^2-\alpha+10\theta)}>0.
\end{split}\end{equation}
Now, assume (\ref{c3.0.4}) is true for $k=4,5,\cdots,m (m\geq 4)$.
For $k=m+1$, we derive from (\ref{c3.0.3}) that
\begin{equation}\label{c3.0.6}
\frac{\varpi_m}{\varpi_{m-1}}=\frac{1}{\big(\frac{\theta}{\alpha}+\frac{1}{2}\big)m}
\bigg[\frac{\theta}{\alpha}(2m-3)+\alpha-\frac{1}{2}
+\bigg(\frac{\theta}{\alpha}-\frac{1}{2}\bigg)(3-m)\frac{\varpi_{m-2}}{\varpi_{m-1}}\bigg],
\end{equation}
which, combined with the assumption, leads to
\begin{equation}\label{c3.0.7}\begin{split}
\frac{\varpi_m}{\varpi_{m-1}}&\geq\frac{1}{\big(\frac{\theta}{\alpha}+\frac{1}{2}\big)m}
\bigg[\frac{\theta}{\alpha}(2m-3)+\alpha-\frac{1}{2}
+\bigg(\frac{\theta}{\alpha}-\frac{1}{2}\bigg)(3-m)\frac{1}{\rho_{m-1}}\bigg]
\\&=
\frac{1}{\big(\frac{\theta}{\alpha}+\frac{1}{2}\big)m}
\bigg[\frac{\theta}{\alpha}(2m-3)+\alpha-\frac{1}{2}\bigg]
\bigg(1+\frac{2\theta+\alpha}{4\theta}\frac{m-1}{m}\bigg)
\\&=:\widetilde{\rho}(\alpha,\theta,m).
\end{split}\end{equation}
To avoid tedious calculation, we shall numerically show that $\widetilde{\rho}(\alpha,\theta,m) \geq \rho(\alpha,\theta,m)$.
Let $m=\frac{4}{x}$ for some $x\in(0,1]$, and define a function $\Theta(x,\alpha,\theta)$ in
$\Sigma=\{(x,\alpha,\theta):x\in (0,1],\alpha\in (0,1),\theta \in[\frac{\alpha}{2},\frac{1}{2}]\}$,
such that $\Theta(x,\alpha,\theta)=\widetilde{\rho}(\alpha,\theta,\frac{4}{x})-\rho(\alpha,\theta,\frac{4}{x})$.
\begin{figure}[htbp]
\centering
\subfigure[]{
\begin{minipage}[t]{0.6\linewidth}
\centering
\includegraphics[width=1\textwidth]{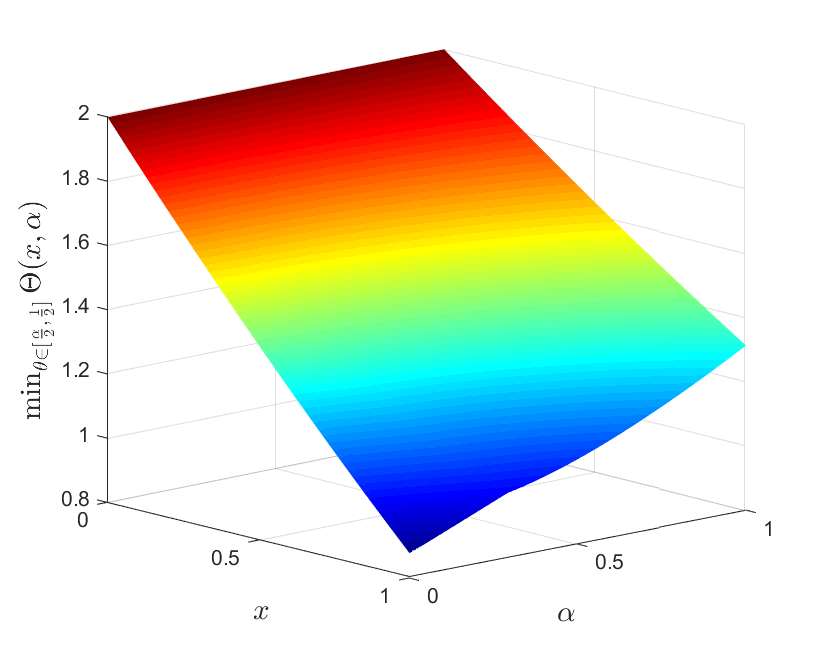}
\end{minipage}%
}%
\centering
\caption{Values of $\min_{\theta \in [\frac{\alpha}{2},\frac{1}{2}]}\Theta(x,\alpha,\theta)$ for $(x,\alpha)\in (0,1]\times (0,1)$.}\label{C1}
\end{figure}
Fig.\ref{C1} illustrates the values $\min_{\theta \in [\frac{\alpha}{2},\frac{1}{2}]}\Theta(x,\alpha,\theta)$ for $(x,\alpha)\in (0,1]\times (0,1)$, which indicates the positivity of $\min_{\theta \in [\frac{\alpha}{2},\frac{1}{2}]}\Theta(x,\alpha,\theta)$ and further $\widetilde{\rho}(\alpha,\theta,m)\geq \rho(\alpha,\theta,m)$.
Therefore, combining (\ref{c3.0.5})-(\ref{c3.0.7}) we justify (\ref{c3.0.4}) by induction.
\par
Finally, by (\ref{c3.0.3}) and (\ref{c3.0.4}), we derive for $k\geq 4$,
\begin{equation}\label{c3.0.8}
\varpi_k=\frac{\varpi_{k-1}}{\big(\frac{\theta}{\alpha}+\frac{1}{2}\big)k}
\bigg\{\bigg[\frac{\theta}{\alpha}(2k-3)+\alpha-\frac{1}{2}\bigg]
+\bigg(\frac{\theta}{\alpha}-\frac{1}{2}\bigg)(3-k)\frac{\varpi_{k-2}}{\varpi_{k-1}}\bigg\}
<0,
\end{equation}
provided $\varpi_{k-1}<0$.
Direct calculations from (\ref{c3.0.3}) show that
\begin{equation}\label{c3.0.9}
\varpi_2=\frac{\alpha(\alpha-1)}{2\big(\frac{\theta}{\alpha}+\frac{1}{2}\big)^2}\varpi_0<0,\quad \text{and}\quad
\varpi_3=\frac{1}{3\big(\frac{\theta}{\alpha}+\frac{1}{2}\big)}\bigg(\frac{3\theta}{\alpha}+\alpha-\frac{1}{2}\bigg)\varpi_2<0.
\end{equation}
Then, the results (\ref{c3.0.2}), (\ref{c3.0.8}) and (\ref{c3.0.9}) validate (ii), which complete the proof of the lemma.
\end{proof}
\begin{remark}\label{rem.3}
We emphasize that the condition $\theta \in [\frac{\alpha}{2},\frac{1}{2}]$ in the above lemma is not necessary to guarantee (i) and (ii), however, the negativity of $\varpi_1$ in (\ref{c3.0.2}) actually requires $\theta>\alpha(\alpha-\frac{1}{2})$, meaning that $\theta \geq \frac{1}{2}$ (and therefore $\theta=\frac{1}{2}$ due to (\ref{c3})) when $\alpha \to 1$, which is still compatible to our condition $\theta \in [\frac{\alpha}{2},\frac{1}{2}]$.
\end{remark}
\begin{lemma}\label{lem.5}
Let $\theta \in [\frac{\alpha}{2},\frac{1}{2}]$.
Assume $v_0=0$.
For any sequence $(v_1,v_2,\cdots,v_n)\in \mathbb{R}^n, n\geq 1$, there holds
\begin{equation}\label{c3.1}
 v^n\sum_{k=1}^{n}\varpi_{n-k}v^k
 \geq \frac{1}{2}\sum_{k=1}^{n}a_{n-k}\big[(v^k)^2-(v^{k-1})^2\big]
 +\frac{1}{2\varpi_0}\bigg\|\sum_{k=1}^{n}\varpi_{n-k}v^k\bigg\|^2,
\end{equation}
where $a_j=\sum_{k=0}^{j}\varpi_k$.
\end{lemma}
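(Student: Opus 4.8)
The plan is to turn the claimed inequality into a weighted Cauchy--Schwarz inequality by two exact rearrangements, so that the only nontrivial input becomes a single sign condition on the partial sums $a_j=\sum_{k=0}^{j}\varpi_k$. Throughout I read the bilinear expressions in the statement as inner products $(\cdot,\cdot)$ in the underlying space (the $L^2(\Omega)$ setting of the application, with the scalar case subsumed) and abbreviate $p^n:=\sum_{k=1}^{n}\varpi_{n-k}v^k$. The only structural facts I will invoke are $\varpi_0>0$ and $\varpi_k\le0$ for $k\ge1$ (Lemma~\ref{lem.4}) together with $\varpi_k=O(k^{\alpha-2})$ (Lemma~\ref{lem.3}(i)).

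First I would simplify the right-hand ``energy increment'' by summation by parts. Since $a_0=\varpi_0$, $a_j-a_{j-1}=\varpi_j$ for $j\ge1$, and $v^0=0$, an Abel rearrangement collapses the telescoping sum to the identity
\[
\sum_{k=1}^{n}a_{n-k}\big[\|v^k\|^2-\|v^{k-1}\|^2\big]=\varpi_0\|v^n\|^2+\sum_{k=1}^{n-1}\varpi_{n-k}\|v^k\|^2,
\]
which I would confirm by checking the index bookkeeping on $n=2,3$.

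Next I would peel off the leading weight by writing $p^n=\varpi_0 v^n+r^n$ with $r^n:=\sum_{k=1}^{n-1}\varpi_{n-k}v^k$; expanding $\|p^n\|^2$ yields the exact identity
\[
\big(v^n,p^n\big)-\frac{1}{2\varpi_0}\|p^n\|^2=\frac{\varpi_0}{2}\|v^n\|^2-\frac{1}{2\varpi_0}\|r^n\|^2.
\]
Substituting this together with the summation-by-parts identity into the claim, and cancelling the common term $\tfrac{\varpi_0}{2}\|v^n\|^2$, reduces the whole statement to the clean inequality
\[
-\sum_{k=1}^{n-1}\varpi_{n-k}\|v^k\|^2\ \ge\ \frac{1}{\varpi_0}\Big\|\sum_{k=1}^{n-1}\varpi_{n-k}v^k\Big\|^2.
\]

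Writing $c_k:=-\varpi_{n-k}\ge0$ (legitimate because $n-k\ge1$ here, by Lemma~\ref{lem.4}(ii)), this reads $\sum_k c_k\|v^k\|^2\ge\varpi_0^{-1}\|\sum_k c_k v^k\|^2$, which is exactly the weighted Cauchy--Schwarz inequality $\|\sum_k c_k v^k\|^2\le(\sum_k c_k)(\sum_k c_k\|v^k\|^2)$ provided that $\sum_{k=1}^{n-1}c_k\le\varpi_0$, i.e.\ provided $a_{n-1}=\sum_{j=0}^{n-1}\varpi_j\ge0$. This nonnegativity of the partial sums is the heart of the argument and the step I expect to be the main obstacle. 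I would establish it by observing that $a_j-a_{j-1}=\varpi_j\le0$ makes $\{a_j\}$ non-increasing, while $\sum_{k=0}^{\infty}\varpi_k=\lim_{\zeta\to1^-}\varpi(\zeta)=0$: the prefactor $(1-\zeta)^{1-\alpha}$ in (\ref{c3.0}) vanishes at $\zeta=1$ since $1-\alpha>0$, and Abel's theorem applies because $\varpi_k=O(k^{\alpha-2})$ is summable by Lemma~\ref{lem.3}(i). A non-increasing sequence tending to $0$ is nonnegative, so $a_{n-1}\ge0$ and the inequality closes. Everything except this last passage is exact algebra; it is only here that the sign structure of Lemma~\ref{lem.4} and the boundary behaviour of $\varpi(\zeta)$ at $\zeta=1$ are genuinely used.
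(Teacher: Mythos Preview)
Your argument is correct and self-contained. The two exact identities you derive (the Abel rearrangement of the energy increment and the split $p^n=\varpi_0 v^n+r^n$) are right, and the reduction to $-\sum_{k=1}^{n-1}\varpi_{n-k}\|v^k\|^2\ge\varpi_0^{-1}\|\sum_{k=1}^{n-1}\varpi_{n-k}v^k\|^2$ is valid. The weighted Cauchy--Schwarz step then only needs $\sum_{j=1}^{n-1}(-\varpi_j)\le\varpi_0$, i.e.\ $a_{n-1}\ge0$, which you obtain exactly as the paper does: monotonicity of $a_j$ from Lemma~\ref{lem.4}(ii) together with $\sum_{k\ge0}\varpi_k=\varpi(1)=0$ (absolute convergence from Lemma~\ref{lem.3}(i)).

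The paper takes a different route. It rewrites $\sum_{k=1}^{n}\varpi_{n-k}v^k=\sum_{j=1}^{n}a_{n-j}\nabla_\tau v^j$ by summing the differences $\nabla_\tau v^j=v^j-v^{j-1}$, records the chain $a_0\ge a_1\ge\cdots\ge a_{n-1}>0$, and then invokes Lemma~A.1 of Liao--Zhang (2019) as a black box to conclude (\ref{c3.1}). Thus the paper outsources the quadratic inequality to an external reference, whereas you prove it directly by Cauchy--Schwarz after peeling off the $\varpi_0 v^n$ term. Your approach is more elementary and makes transparent exactly where the sign conditions on $\varpi_k$ enter; the paper's version is shorter on the page but relies on the cited lemma. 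Both arguments hinge on precisely the same structural facts about $\{\varpi_k\}$ and $\{a_j\}$.
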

\begin{proof}
Denote by $\nabla_\tau v^k:=v^k-v^{k-1}$ for $k\geq 1$.
Then, $v^k=\sum_{j=1}^{k}\nabla_\tau v^j$ and
\[
\sum_{k=1}^{n}\varpi_{n-k}v^k=
\sum_{k=1}^{n}\varpi_{n-k}\sum_{j=1}^{k}\nabla_\tau v^j
=\sum_{j=1}^{n}\nabla_\tau v^j\sum_{k=j}^{n}\varpi_{n-k}
=\sum_{j=1}^{n}a_{n-j}\nabla_\tau v^j.
\]
By Lemma \ref{lem.4} and the fact $\sum_{k=1}^{\infty}\varpi_k=0$, one can directly get
\begin{equation}\label{c3.2}
a_0\geq a_1\geq a_2\geq \cdots \geq a_{n-1}>0,\quad \text{for any } n\geq 1,
\end{equation}
followed by the inequality (\ref{c3.1}) according to Lemma A.1 of \cite{liao2019discrete}.
\end{proof}
\begin{remark}\label{rem.4}
For the sequence $\{a_j\}_{j=0}^\infty$ defined in the above lemma, there holds
\[
a(\zeta)=\sum_{j=0}^{\infty}a_j\zeta^j
=\sum_{j=0}^{\infty}\bigg(\sum_{k=0}^{j}\varpi_k\bigg)\zeta^j
=\frac{\varpi(\zeta)}{1-\zeta}
=\frac{1}{\omega(\zeta)}
=(1-\zeta)^{-\alpha}\bigg[\frac{1}{2}(1+\zeta)+\frac{\theta}{\alpha}(1-\zeta)\bigg]^\alpha.
\]
By quite similar analysis as that of Lemma \ref{lem.3}, we can get (i) $a_k=O(k^{\alpha-1})$ and (ii) $\tau^\alpha e^{-\theta\tau}a(e^{-\tau})=1+O(\tau^2)$.
If we define $\mathcal{I}^{\alpha,\theta}_\tau u^{n}:=\tau^\alpha \sum_{k=0}^{n} a_k u^{n-k}$, then (i) and (ii) indicate that
$\mathcal{I}^{\alpha,\theta}_\tau u^{n}$ approximates $\mathcal{I}^\alpha u$ at time $t=t_{n+\theta}$ with second-order accuracy (see \cite[Theorem 1]{liu2019unified}).
\end{remark}

The time semi-discrete $\theta$-scheme for (\ref{a1})-(\ref{a3}) then reads: For given $\boldsymbol E^0, H^0$ and $\boldsymbol P^0$, find $\boldsymbol E_\tau^n \in H_0({\rm curl};\Omega)\cap (L^2(\Omega))^2$, $H^n_\tau \in L^2(\Omega)$ and $\boldsymbol P^n_\tau \in (L^2(\Omega))^2$ with $n\geq 1$ such that
\begin{eqnarray}\label{c4}
\epsilon_0 \epsilon_\infty \big(\partial_\tau^{n-\theta} \boldsymbol E_\tau,\boldsymbol\chi\big)
+\big(\partial_\tau^{n-\theta} \boldsymbol P_\tau,\boldsymbol\chi\big)
-\big(\overline{H}^{n-\theta}_\tau,\nabla \times \boldsymbol\chi\big)&=&0,
\quad \forall \boldsymbol\chi \in H_0({\rm curl};\Omega),
\\
\label{c5}
\mu_0 \big(\partial_\tau^{n-\theta} H_\tau,\phi\big)
+\big(\nabla \times \overline{\boldsymbol E}_\tau^{n-\theta},\phi\big)&=&0,
\quad \forall \phi \in L^2(\Omega),\\
\label{c6}
\tau_0^\alpha \big(\partial^{\alpha}_\tau \boldsymbol P_\tau^{n-\theta},\boldsymbol\psi\big)+\big(\overline{\boldsymbol P}^{n-\theta}_\tau,\boldsymbol\psi\big)
-\epsilon_0(\epsilon_s-\epsilon_\infty) \big(\overline{\boldsymbol E}^{n-\theta}_\tau,\boldsymbol\psi\big)&=&0,
\quad \forall \boldsymbol\psi \in (L^2(\Omega))^2.
\end{eqnarray}
\begin{theorem}\label{thm1}
For arbitrary $\theta \in [\frac{\alpha}{2},\frac{1}{2}]$ with $\alpha \in (0,1)$ and any $\tau>0$, the semidiscrete scheme (\ref{c4})-(\ref{c6}) fulfills the discrete energy dissipation:
\begin{equation}\label{c7}
\widetilde{\mathcal{E}}_\alpha^n \leq
\widetilde{\mathcal{E}}_\alpha^{n-1},\quad\text{for } n\geq 1,
\end{equation}
where $\widetilde{\mathcal{E}}_\alpha^n:=\tau_0^\alpha\mathcal{I}_\tau^{\alpha,\theta} \big\|\partial_\tau^\alpha \boldsymbol P^{n-\theta}_\tau\big\|^2+\|\boldsymbol P^n_\tau\|^2+\epsilon_0(\epsilon_s-\epsilon_\infty)\big(\epsilon_0\epsilon_\infty\|\boldsymbol E^n_\tau\|^2+\mu_0 \|H^n_\tau\|^2\big)$ is the discrete counterpart of the continuous energy $\mathcal{E}_\alpha(t_n)$.
\end{theorem}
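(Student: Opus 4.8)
The plan is to reproduce, at the discrete level, the argument of Lemma \ref{lem.2}, replacing each continuous manipulation by its mesh analogue. First I would test (\ref{c4}) with $\boldsymbol\chi=\overline{\boldsymbol E}_\tau^{n-\theta}$ and (\ref{c5}) with $\phi=\overline{H}_\tau^{n-\theta}$ and add the two identities; since $(\overline H_\tau^{n-\theta},\nabla\times\overline{\boldsymbol E}_\tau^{n-\theta})$ occurs with opposite signs, the curl terms cancel and I obtain the discrete counterpart of (\ref{b4.1}). Multiplying by $\epsilon_0(\epsilon_s-\epsilon_\infty)$ and adding the result to (\ref{c6}) tested with $\boldsymbol\psi=\partial_\tau^{n-\theta}\boldsymbol P_\tau$, the coupling terms $\pm\,\epsilon_0(\epsilon_s-\epsilon_\infty)(\overline{\boldsymbol E}_\tau^{n-\theta},\partial_\tau^{n-\theta}\boldsymbol P_\tau)$ cancel exactly as in (\ref{b6}), yielding a single scalar identity equal to zero.

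The next step converts each term $(\partial_\tau^{n-\theta}u,\overline u^{n-\theta})$ into an energy increment. A direct expansion gives the algebraic identity
\begin{equation*}
(\partial_\tau^{n-\theta}u,\overline u^{n-\theta})
=\frac{1}{2\tau}\big(\|u^n\|^2-\|u^{n-1}\|^2\big)
+\frac{1}{\tau}\Big(\tfrac12-\theta\Big)\|u^n-u^{n-1}\|^2,
\end{equation*}
which I would apply to $u=\boldsymbol E, H, \boldsymbol P$. Because $\theta\le\frac12$, the quadratic remainder is nonnegative and may be dropped in the favourable direction; this is precisely where the upper bound $\theta\le\frac12$ of the hypothesis is used. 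These three terms produce the $\|\boldsymbol P^n\|^2$, $\|\boldsymbol E^n\|^2$ and $\|H^n\|^2$ contributions to $\widetilde{\mathcal E}_\alpha^n$.

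The crux is the fractional term $\tau_0^\alpha(\partial_\tau^\alpha\boldsymbol P_\tau^{n-\theta},\partial_\tau^{n-\theta}\boldsymbol P_\tau)$, the discrete analogue of $\tau_0^\alpha(\partial_t^\alpha\boldsymbol P,\partial_t\boldsymbol P)$ handled by (\ref{a7})--(\ref{a8}). Writing $w^k:=\partial_\tau^\alpha\boldsymbol P_\tau^{k-\theta}$ and using $\boldsymbol P^0=\boldsymbol 0$ (so $w^0=\boldsymbol 0$), I would first establish the discrete analogue of (\ref{a7}),
\begin{equation*}
\partial_\tau^{n-\theta}\boldsymbol P_\tau
=\tau^{\alpha-1}\sum_{k=1}^{n}\varpi_{n-k}w^k,
\end{equation*}
by a generating-function computation: the series of $\{w^k\}$ is $\tau^{-\alpha}\omega(\zeta)\widehat{\boldsymbol P}(\zeta)$ and that of $\{\partial_\tau^{n-\theta}\boldsymbol P_\tau\}$ is $\tau^{-1}(1-\zeta)\widehat{\boldsymbol P}(\zeta)$, the two being linked through $\varpi(\zeta)=(1-\zeta)/\omega(\zeta)$ from (\ref{c3.0}). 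Substituting this representation turns the fractional term into $\tau_0^\alpha\tau^{\alpha-1}\big(w^n,\sum_{k=1}^n\varpi_{n-k}w^k\big)$, to which Lemma \ref{lem.5} applies with $v^k=w^k$ (used pointwise in $\boldsymbol x$, then integrated over $\Omega$ and summed over the two components). The first term it produces, $\tfrac12\sum_{k=1}^n a_{n-k}(\|w^k\|^2-\|w^{k-1}\|^2)$, telescopes by Abel summation to $\tfrac12\big(\sum_{k=0}^n a_{n-k}\|w^k\|^2-\sum_{k=0}^{n-1}a_{n-1-k}\|w^k\|^2\big)$; after the prefactor $\tau_0^\alpha\tau^{\alpha-1}$ this is exactly $\tfrac1{2\tau}$ times the increment of $\tau_0^\alpha\mathcal I_\tau^{\alpha,\theta}\|\partial_\tau^\alpha\boldsymbol P_\tau^{n-\theta}\|^2$ (Remark \ref{rem.4}), while the remaining term $\frac{1}{2\varpi_0}\big\|\sum_{k=1}^n\varpi_{n-k}w^k\big\|^2$ is nonnegative.

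Collecting everything, the identity ``$=0$'' becomes $0\ge\frac{1}{2\tau}\big(\widetilde{\mathcal E}_\alpha^n-\widetilde{\mathcal E}_\alpha^{n-1}\big)+(\text{nonnegative dissipation})$, from which (\ref{c7}) follows at once. The main obstacle is the fractional step: one must correctly recognise $\partial_\tau^{n-\theta}\boldsymbol P_\tau$ as a discrete Riemann--Liouville derivative of $\{w^k\}$ with kernel $\varpi$, and then confirm that Lemma \ref{lem.5}---whose validity rests on the sign properties of $\{\varpi_k\}$ and the monotonicity $a_0\ge a_1\ge\cdots>0$ secured under $\theta\ge\frac\alpha2$ in Lemmas \ref{lem.4}--\ref{lem.5}---reproduces \emph{exactly} the $\mathcal I_\tau^{\alpha,\theta}$-part of $\widetilde{\mathcal E}_\alpha^n$. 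This is where the lower bound $\theta\ge\frac\alpha2$ becomes indispensable, complementing the $\theta\le\frac12$ used in the field terms, so that the full range $\theta\in[\frac\alpha2,\frac12]$ is needed.
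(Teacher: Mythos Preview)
Your proposal is correct and follows essentially the same route as the paper's proof: test (\ref{c4})--(\ref{c5}) with $\overline{\boldsymbol E}_\tau^{n-\theta}$, $\overline{H}_\tau^{n-\theta}$ and (\ref{c6}) with $\partial_\tau^{n-\theta}\boldsymbol P_\tau$, use the identity $(\partial_\tau^{n-\theta}u,\overline u^{n-\theta})\ge\frac{1}{2\tau}(\|u^n\|^2-\|u^{n-1}\|^2)$ for $\theta\le\frac12$, then rewrite $\partial_\tau^{n-\theta}\boldsymbol P_\tau$ as the $\varpi$-convolution of $w^k=\partial_\tau^\alpha\boldsymbol P_\tau^{k-\theta}$ and invoke Lemma~\ref{lem.5} to produce the $\mathcal I_\tau^{\alpha,\theta}$-increment. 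The only cosmetic slip is the justification ``$\boldsymbol P^0=\boldsymbol 0$ (so $w^0=\boldsymbol 0$)'': in fact $w^0=0$ simply because the convolution sum in (\ref{c2}) is empty for $n=0$, independently of $\boldsymbol P^0$.
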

\begin{proof}
Let $\boldsymbol \chi=\overline{\boldsymbol E}^{n-\theta}_\tau$ in (\ref{c4}) and $\phi=\overline{H}^{n-\theta}_\tau$ in (\ref{c5}), respectively, and add these two equation together to obtain
\begin{equation}\label{c7}
\frac{1}{2}\epsilon_0\epsilon_\infty\partial_\tau^{n-\theta}\|\boldsymbol E_\tau\|^2
+\frac{\mu_0}{2}\partial_\tau^{n-\theta}\|H_\tau\|^2
+\big(\partial_\tau^{n-\theta}\boldsymbol P_\tau,\overline{\boldsymbol E}^{n-\theta}_\tau\big)\leq 0,
\end{equation}
where we have used the fact that $(a-b)((1-\theta)a+\theta b)\geq \frac{1}{2}(a^2-b^2)$ provided $\theta \in (0,\frac{1}{2}]$.
Replace $\boldsymbol\psi$ in (\ref{c6}) by $\partial_\tau^{n-\theta}\boldsymbol P_\tau$ and add the result to (\ref{c7}) after multiplying (\ref{c7}) by $\epsilon_0(\epsilon_s-\epsilon_\infty)$, to get
\begin{equation}\label{c8}
\tau_0^\alpha \big(\partial_\tau^\alpha \boldsymbol P^{n-\theta}_\tau,\partial_\tau^{n-\theta}\boldsymbol P_\tau\big)
+\frac{1}{2}\partial_\tau^{n-\theta}\|\boldsymbol P_\tau\|^2
+\frac{1}{2}\epsilon_0(\epsilon_s-\epsilon_\infty)\big(\epsilon_0\epsilon_\infty\partial_\tau^{n-\theta}\|\boldsymbol E_\tau\|^2+\mu_0\partial_\tau^{n-\theta}\|H_\tau\|^2\big)\leq 0.
\end{equation}
then, using the weights $\varpi_k$ one can immediately obtain
\begin{equation}\label{c9}
  \partial_\tau^{n-\theta}\boldsymbol P_\tau=\tau^{\alpha-1}\sum_{k=1}^{n}\varpi_{n-k}\big(\partial_\tau^\alpha \boldsymbol P^{k-\theta}_\tau\big).
\end{equation}
By Lemma \ref{lem.5}, the first term of (\ref{c8}) reads
\begin{equation}\label{c10}\begin{split}
&\big(\partial_\tau^\alpha \boldsymbol P^{n-\theta}_\tau,\partial_\tau^{n-\theta}\boldsymbol P_\tau\big)
=\tau^{\alpha-1}\bigg(\partial_\tau^\alpha \boldsymbol P^{n-\theta}_\tau,\sum_{k=1}^{n}\varpi_{n-k}\big(\partial_\tau^\alpha \boldsymbol P^{k-\theta}_\tau\big)\bigg)
\\&\geq
\frac{\tau^{\alpha-1}}{2}\sum_{k=1}^{n}a_{n-k}\big[\big\|\partial_\tau^\alpha \boldsymbol P^{k-\theta}_\tau\big\|^2-\big\|\partial_\tau^\alpha \boldsymbol P^{k-1-\theta}_\tau\big\|^2\big]
+\frac{\tau^{\alpha-1}}{2\varpi_0}\bigg\|\sum_{k=1}^{n}\varpi_{n-k}\big(\partial_\tau^\alpha \boldsymbol P^{k-\theta}_\tau\big)\bigg\|^2
\\&=
\frac{\tau^{\alpha-1}}{2}\sum_{k=1}^{n}a_{n-k}\big\|\partial_\tau^\alpha \boldsymbol P^{k-\theta}_\tau\big\|^2
-\frac{\tau^{\alpha-1}}{2}\sum_{k=1}^{n-1}a_{n-1-k}\big\|\partial_\tau^\alpha \boldsymbol P^{k-\theta}_\tau\big\|^2
+\frac{\tau^{\alpha-1}}{2\varpi_0}\bigg\|\sum_{k=1}^{n}\varpi_{n-k}\big(\partial_\tau^\alpha \boldsymbol P^{k-\theta}_\tau\big)\bigg\|^2
\\&=
\frac{1}{2\tau}\big(\mathcal{I}_\tau^{\alpha,\theta} \big\|\partial_\tau^\alpha \boldsymbol P^{n-\theta}_\tau\big\|^2
-\mathcal{I}_\tau^{\alpha,\theta} \big\|\partial_\tau^\alpha \boldsymbol P^{n-1-\theta}_\tau\big\|^2\big)
+\frac{\tau^{\alpha-1}}{2\varpi_0}\bigg\|\sum_{k=1}^{n}\varpi_{n-k}\big(\partial_\tau^\alpha \boldsymbol P^{k-\theta}_\tau\big)\bigg\|^2.
\end{split}\end{equation}
Finally, combining (\ref{c8}), (\ref{c9}) with (\ref{c10}), one can get
\begin{equation}\label{c11}
 \partial_\tau^{n-\theta}\widetilde{\mathcal{E}}_\alpha
 +\tau_0^\alpha \frac{\tau^{1-\alpha}}{\varpi_0}
 \big\|
 \partial_\tau^{n-\theta}\boldsymbol P_\tau
 \big\|^2 \leq 0,
\end{equation}
meaning that $\partial_\tau^{n-\theta}\widetilde{\mathcal{E}}_\alpha \leq 0$, which completes the proof of the theorem.
\end{proof}

\section{Error Analysis}
In this section, we consider the convergence rate of the semi-discrete scheme by assuming the solution is smooth enough.
Source terms are added to the Cole-Cole model, reading that
\begin{eqnarray}\label{aa1}
\epsilon_0 \epsilon_\infty \frac{\partial \boldsymbol E}{\partial t}(\boldsymbol x,t)&=&\nabla \times H(\boldsymbol x,t)-\frac{\partial \boldsymbol P}{\partial t}(\boldsymbol x,t)+\boldsymbol f_1(\boldsymbol x,t),\\
\label{aa2}
\mu_0 \frac{\partial H}{\partial t}(\boldsymbol x,t)&=&-\nabla \times \boldsymbol E(\boldsymbol x,t)+ f_2(\boldsymbol x,t),\\
\label{aa3}
\tau_0^\alpha \partial^\alpha_t \boldsymbol P(\boldsymbol x,t)+\boldsymbol P(\boldsymbol x,t)&=&\epsilon_0(\epsilon_s-\epsilon_\infty) \boldsymbol E(\boldsymbol x,t)+\boldsymbol f_3(\boldsymbol x,t).
\end{eqnarray}
The time semi-discrete scheme is formulated as
\begin{eqnarray}\label{cc4}
\epsilon_0 \epsilon_\infty \partial_\tau^{n-\theta} \boldsymbol E_\tau
+\partial_\tau^{n-\theta} \boldsymbol P_\tau
-\nabla \times \overline{H}^{n-\theta}_\tau&=&\boldsymbol f_1(\boldsymbol x,t_{n-\theta}),
\\
\label{cc5}
\mu_0 \partial_\tau^{n-\theta} H_\tau
+\nabla \times \overline{\boldsymbol E}_\tau^{n-\theta}&=&f_2(\boldsymbol x,t_{n-\theta}),\\
\label{cc6}
\tau_0^\alpha \partial^{\alpha}_\tau \boldsymbol P_\tau^{n-\theta}
+\overline{\boldsymbol P}^{n-\theta}_\tau
-\epsilon_0(\epsilon_s-\epsilon_\infty) \overline{\boldsymbol E}^{n-\theta}_\tau&=&\boldsymbol f_3(\boldsymbol x,t_{n-\theta}).
\end{eqnarray}
The following theorem demonstrates that the temporal accuracy of the energy-dissipation-preserving scheme is of order $O(\tau)$ when $\theta \ne \frac{1}{2}$, and improves to $O(\tau^2)$ when $\theta = \frac{1}{2}$.
\begin{theorem}
Let $(\boldsymbol E(t),\boldsymbol P(t), H(t))$ be the solution of (\ref{a1})-(\ref{a3}) where $\boldsymbol E \in C(0,T; H_0({\rm curl};\Omega))\cap C^3(0,T;(L^2(\Omega))^2)$, $\boldsymbol P \in C^3(0,T;(L^2(\Omega))^2)$ and $H\in C^3(0,T;H_0({\rm curl};\Omega))$.
Let $(\boldsymbol E^n_\tau,\boldsymbol P^n_\tau, H^n_\tau)$ be the solution of the time semi-discrete scheme (\ref{c4})-(\ref{c6}).
For sufficiently small $\tau$ there holds that
\begin{equation}\label{F.9}\begin{split}
\max_{n\geq 1}\big(\|\boldsymbol E^n-\boldsymbol E_\tau^n\|
+\|\boldsymbol P^n-\boldsymbol P_\tau^n\|
+\|H^n-H_\tau^n\|\big)
\leq C\tau^s,\quad
s=
\begin{cases}
1, & \text{if ~} \theta \in [\frac{\alpha}{2},\frac{1}{2}),\\
2, & \text{if ~} \theta=\frac{1}{2},
\end{cases}
\end{split}
\end{equation}
 where $C$ is a constant independent of $\tau$.
 \begin{proof}
 Rewrite the Cole-Cole model with source terms (\ref{aa1})-(\ref{aa3}) into the following form
 \begin{eqnarray}\label{cdc4}
\epsilon_0 \epsilon_\infty \partial_\tau^{n-\theta} \boldsymbol E
+\partial_\tau^{n-\theta} \boldsymbol P
-\nabla \times \overline{H}^{n-\theta}&=&\boldsymbol f_1(\boldsymbol x,t_{n-\theta})+\boldsymbol R_1^{n-\theta},
\\
\label{cdc5}
\mu_0 \partial_\tau^{n-\theta} H
+\nabla \times \overline{\boldsymbol E}^{n-\theta}&=&f_2(\boldsymbol x,t_{n-\theta})+R_2^{n-\theta},\\
\label{cdc6}
\tau_0^\alpha \partial^{\alpha}_\tau \boldsymbol P^{n-\theta}
+\overline{\boldsymbol P}^{n-\theta}
-\epsilon_0(\epsilon_s-\epsilon_\infty) \overline{\boldsymbol E}^{n-\theta}&=&\boldsymbol f_3(\boldsymbol x,t_{n-\theta})+\boldsymbol R_3^{n-\theta},
\end{eqnarray}
where the error terms are defined by
\begin{equation}\begin{split}
\boldsymbol R_1^{n-\theta}
&=\epsilon_0 \epsilon_\infty \bigg(\partial_\tau^{n-\theta} \boldsymbol E-\frac{\partial \boldsymbol E}{\partial t}(\boldsymbol x,t_{n-\theta})\bigg)
+\bigg(\partial_\tau^{n-\theta} \boldsymbol P-\frac{\partial \boldsymbol P}{\partial t}(\boldsymbol x,t_{n-\theta})\bigg)
\\&
\quad -\nabla \times\big( \overline{H}^{n-\theta}- H(\boldsymbol x,t_{n-\theta})\big),
\\
R_2^{n-\theta}&=\mu_0 \bigg(\partial_\tau^{n-\theta} H-\frac{\partial H}{\partial t}(\boldsymbol x,t_{n-\theta})\bigg)
+\nabla \times \big(\overline{\boldsymbol E}^{n-\theta}- \boldsymbol E(\boldsymbol x,t_{n-\theta})\big),
\\
\boldsymbol R_3^{n-\theta}&=
\tau_0^\alpha \big(\partial^{\alpha}_\tau \boldsymbol P^{n-\theta}-\partial^\alpha_t \boldsymbol P(\boldsymbol x,t_{n-\theta})\big)
-\epsilon_0(\epsilon_s-\epsilon_\infty) \big(\overline{\boldsymbol E}^{n-\theta}-\boldsymbol E(\boldsymbol x,t_{n-\theta})\big).
\end{split}\end{equation}
It is clear that $\|\boldsymbol R_1^{n-\theta}\|+\|R_2^{n-\theta}\|+\|\boldsymbol R_3^{n-\theta}\|=O(\tau^s)$ where $s=1$ if $\theta \neq \frac{1}{2}$ and $s=2$ if $\theta=\frac{1}{2}$.
Let ${\boldsymbol\xi}_E^n:=\boldsymbol E^n-\boldsymbol E^n_\tau$, $\xi_H^n:=H^n-H_\tau^n$ and ${\boldsymbol\xi}_P^n:=\boldsymbol P^n-\boldsymbol P_\tau^n$.
Subtracting (\ref{cc4})-(\ref{cc6}) from (\ref{cdc4})-(\ref{cdc6}), one gets
\begin{eqnarray}\label{dd4}
\epsilon_0 \epsilon_\infty \partial_\tau^{n-\theta} {\boldsymbol\xi}_E
+\partial_\tau^{n-\theta} {\boldsymbol\xi}_P
-\nabla \times \overline{\xi}_H^{n-\theta}&=&\boldsymbol R_1^{n-\theta},
\\
\label{dd5}
\mu_0 \partial_\tau^{n-\theta} \xi_H
+\nabla \times \overline{\boldsymbol \xi}_E^{n-\theta}&=&R_2^{n-\theta},\\
\label{dd6}
\tau_0^\alpha \partial^{\alpha}_\tau \boldsymbol \xi_P^{n-\theta}
+\overline{\boldsymbol \xi}_P^{n-\theta}
-\epsilon_0(\epsilon_s-\epsilon_\infty) \overline{\boldsymbol \xi}_E^{n-\theta}&=&\boldsymbol R_3^{n-\theta}.
\end{eqnarray}
By similar arguments as the stability analysis in Theorem \ref{thm1}, we have
\begin{equation}\begin{split}\label{b7}
&\quad\tau^{1-\alpha}
 \big\|
 \partial_\tau^{n-\theta}\boldsymbol \xi_P
 \big\|^2
+\partial_\tau^{n-\theta}\|\boldsymbol \xi_P\|^2
+\partial_\tau^{n-\theta}\|\boldsymbol \xi_E\|^2
+\partial_\tau^{n-\theta}\|\xi_H\|^2
\\ &\leq 
C (\boldsymbol R_1^{n-\theta},\overline{\boldsymbol \xi}_E^{n-\theta})
+C(R_2^{n-\theta},\overline{\xi}_H^{n-\theta})
+C(\boldsymbol R_3^{n-\theta},\partial_\tau^{n-\theta} {\boldsymbol\xi}_P).
\end{split}\end{equation}
Multiply both hand-side of (\ref{b7}) by $\tau$, replace $n$ with $j$ and sum the index $j$ from $1$ to $n$ to obtain
\begin{equation}\begin{split}\label{b8}
&\quad
\|\boldsymbol \xi_P^n\|^2
+\|\boldsymbol \xi_E^n\|^2
+\|\xi_H^n\|^2
\\&\leq 
C\tau\sum_{j=1}^n
\bigg(
 (\boldsymbol R_1^{j-\theta},\overline{\boldsymbol \xi}_E^{j-\theta})
+(R_2^{j-\theta},\overline{\xi}_H^{j-\theta})
+(\boldsymbol R_3^{j-\theta},\partial_\tau^{j-\theta} {\boldsymbol\xi}_P)
\bigg).
\end{split}\end{equation}
Based on the identity
\begin{equation}\begin{split}\label{b9}
(\boldsymbol R_3^{j-\theta},\partial_\tau^{j-\theta} {\boldsymbol\xi}_P)
=\frac{1}{\tau}\big[
(\boldsymbol R_3^{j-\theta},{\boldsymbol\xi}_P^j)
-(\boldsymbol R_3^{j-1-\theta},{\boldsymbol\xi}_P^{j-1})
\big]
-(\partial_{\tau,\theta}^j \boldsymbol R_3,{\boldsymbol\xi}_P^{j-1})
\end{split}\end{equation}
where $\partial_{\tau,\theta}^j \boldsymbol R_3:=\frac{1}{\tau}( \boldsymbol R_3^{j-\theta}-\boldsymbol R_3^{j-\theta-1})$.
For sufficiently smooth $\boldsymbol P$ and $\boldsymbol E$, it holds that $\|\partial_{\tau,\theta}^j \boldsymbol R_3\|=O(\tau^s)$.
Applying the Cauchy-Schwarz and Young inequalities, (\ref{b8}) and (\ref{b9}) yield
\begin{equation}\begin{split}
\|\boldsymbol \xi_P^n\|^2
+\|\boldsymbol \xi_E^n\|^2
+\|\xi_H^n\|^2
\leq
C\tau \sum_{j=1}^n
\big(
\|\boldsymbol \xi_P^j\|^2
+\|\boldsymbol \xi_E^j\|^2
+\|\xi_H^j\|^2
\big)
+O(\tau^{2s}).
\end{split}\end{equation}
Finally, using the Gronwall inequality, we complete the proof of the theorem.
 \end{proof}
\end{theorem}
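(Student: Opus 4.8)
The plan is to run the classical consistency--stability--Gronwall argument, recycling the discrete energy estimate of Theorem~\ref{thm1} at the level of the errors. First I would substitute the exact solution into the scheme to generate the truncation residuals $\boldsymbol R_1^{n-\theta},R_2^{n-\theta},\boldsymbol R_3^{n-\theta}$ of (\ref{cdc4})--(\ref{cdc6}) and confirm the order $\|\boldsymbol R_1^{n-\theta}\|+\|R_2^{n-\theta}\|+\|\boldsymbol R_3^{n-\theta}\|=O(\tau^s)$. Here the two building blocks behave differently: a Taylor expansion about $t_{n-\theta}$ shows that the divided difference $\partial_\tau^{n-\theta}u$ and the convex average $\overline u^{n-\theta}$ are second-order accurate precisely when $t_{n-\theta}$ is the midpoint, i.e.\ $\theta=\frac12$, and only first-order otherwise (this is where the stated $C^3$ time-regularity is consumed, the coupling (\ref{aa2}) transferring regularity to the curl terms); whereas the fractional piece $\partial_\tau^\alpha u^{n-\theta}$ reproduces $\partial_t^\alpha u(t_{n-\theta})$ to second order for \emph{every} admissible $\theta$, by the shifted construction of the quadrature weights in (\ref{c3}). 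Combining these yields $s=2$ for $\theta=\frac12$ and $s=1$ for $\theta\in[\frac{\alpha}{2},\frac12)$.

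Next I would subtract the scheme (\ref{cc4})--(\ref{cc6}) from the residual equations to obtain the error system (\ref{dd4})--(\ref{dd6}) and replay the proof of Theorem~\ref{thm1} verbatim on $\boldsymbol\xi_E,\xi_H,\boldsymbol\xi_P$: test (\ref{dd4}) with $\overline{\boldsymbol\xi}_E^{n-\theta}$ and (\ref{dd5}) with $\overline\xi_H^{n-\theta}$, exploit $(a-b)((1-\theta)a+\theta b)\ge\frac12(a^2-b^2)$, then add $\epsilon_0(\epsilon_s-\epsilon_\infty)$ times the outcome to (\ref{dd6}) tested with $\partial_\tau^{n-\theta}\boldsymbol\xi_P$. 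Applying Lemma~\ref{lem.5} with $v^k=\partial_\tau^\alpha\boldsymbol\xi_P^{k-\theta}$ converts the fractional convolution into the telescoping discrete energy plus the nonnegative dissipation $\tau^{1-\alpha}\varpi_0^{-1}\|\partial_\tau^{n-\theta}\boldsymbol\xi_P\|^2$, leaving the residuals on the right as in (\ref{b7}). Multiplying by $\tau$, summing over $j=1,\dots,n$, and discarding the nonnegative dissipation telescopes the left-hand side into $\|\boldsymbol\xi_P^n\|^2+\|\boldsymbol\xi_E^n\|^2+\|\xi_H^n\|^2$, yielding the summed inequality (\ref{b8}).

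The main obstacle is the residual pairing $(\boldsymbol R_3^{j-\theta},\partial_\tau^{j-\theta}\boldsymbol\xi_P)$ on the right of (\ref{b8}): a direct Young split against the dissipation term would leave a factor $\tau^{\alpha-1}\|\boldsymbol R_3\|^2$, and after the extra $\tau$ and summation over $O(\tau^{-1})$ levels this degrades the bound to $\tau^{\alpha-1}O(\tau^{2s})$, which is \emph{not} $O(\tau^{2s})$ for $\alpha\in(0,1)$. To sidestep this loss I would invoke the summation-by-parts identity (\ref{b9}) to move the difference operator onto $\boldsymbol R_3$. The telescoped boundary term at the lower index vanishes since $\boldsymbol\xi_P^0=0$, the top boundary term $(\boldsymbol R_3^{n-\theta},\boldsymbol\xi_P^n)$ is absorbed into the left-hand side by Young's inequality, and the remainder $(\partial_{\tau,\theta}^j\boldsymbol R_3,\boldsymbol\xi_P^{j-1})$ is harmless because the $C^3$-regularity of $\boldsymbol P$ and $\boldsymbol E$ gives $\|\partial_{\tau,\theta}^j\boldsymbol R_3\|=O(\tau^s)$. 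The companion terms $(\boldsymbol R_1^{j-\theta},\overline{\boldsymbol\xi}_E^{j-\theta})$ and $(R_2^{j-\theta},\overline\xi_H^{j-\theta})$ need only plain Cauchy--Schwarz and Young. Collecting everything produces $\|\boldsymbol\xi_P^n\|^2+\|\boldsymbol\xi_E^n\|^2+\|\xi_H^n\|^2\le C\tau\sum_{j=1}^n(\|\boldsymbol\xi_P^j\|^2+\|\boldsymbol\xi_E^j\|^2+\|\xi_H^j\|^2)+O(\tau^{2s})$; the discrete Gronwall inequality then closes the estimate, and taking square roots and maximizing over $n$ delivers the rate (\ref{F.9}).
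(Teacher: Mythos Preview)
Your proposal is correct and follows essentially the same route as the paper's proof: set up the truncation residuals, derive the error system, recycle the energy argument of Theorem~\ref{thm1}, sum in time, handle the awkward pairing $(\boldsymbol R_3^{j-\theta},\partial_\tau^{j-\theta}\boldsymbol\xi_P)$ via the summation-by-parts identity (\ref{b9}), and close with Gronwall. If anything, you supply more motivation than the paper does---in particular your explanation of why a direct Young split on that pairing loses a factor $\tau^{\alpha-1}$ is exactly the reason the identity (\ref{b9}) is needed, though the paper leaves this implicit.
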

\begin{remark}
The fully discrete scheme can be directly formulated using the \textit{N\'{e}d\'{e}lec} element, and its error analysis follows straightforwardly. For completeness, we refer the reader to \cite{baoli2023discrete} for details, as these aspects fall outside the scope of this paper.
\end{remark}

\section{Numerical tests}
In this section, we perform numerical tests to validate the theoretical findings regarding the energy-decay property and convergence rates.  
\subsection{C-C model with source terms}
We consider the domain $\Omega=(0,1)^2$ and final time $T=1$, with the parameter values specified by $\epsilon_0\epsilon_\infty=\mu_0=\tau_0=\epsilon_0(\epsilon_s-\epsilon_\infty)=1$. For the purpose of convergence verification, the following smooth functions are prescribed as the exact solutions:
\begin{equation}\label{N.1}\begin{split}
\boldsymbol E(x,y,t)&=e^{-t}
\begin{pmatrix}
  (x^2+1)\sin(\pi y) \\
  \sin(\pi x) (y-\frac{1}{2})
\end{pmatrix},\quad
\boldsymbol P(x,y,t)=t^3
\begin{pmatrix}
  (x^2+1)y(y-1) \\
   x(x-1)(y-\frac{1}{2})
\end{pmatrix},
\\
H(x,y,t)&=e^{-t}(x^3+1)(y^3+1).
\end{split}
\end{equation}
The corresponding source terms are subsequently derived from these defined solutions.
\par
As a comparative benchmark, we approximate the term \(\partial^\alpha_t \boldsymbol{P}(\boldsymbol{x}, t_{n-\theta})\) using alternative approaches as follows:
\begin{equation}\begin{split}\label{t1}
\partial^\alpha_t \boldsymbol P(\boldsymbol x,t_{n-\theta})&=
(1-\theta)\partial^\alpha_t \boldsymbol P(\boldsymbol x,t_{n})
+\theta\partial^\alpha_t \boldsymbol P(\boldsymbol x,t_{n-1})
+O(\tau^2)
\\&=
\tau^{-\alpha}\sum_{k=0}^n
[(1-\theta)\tilde{\omega}_{n-k}+\theta\tilde{\omega}_{n-k-1}]
 \boldsymbol P(\boldsymbol x,t_{k})+O(\tau^2),
\end{split}\end{equation}
where the coefficients \(\tilde{\omega}_k\) are generated from the fractional BDF-2 scheme \cite{lubich1986discretized} via the generating function \(\tilde{\omega}(\zeta) = \left( \frac{3}{2} - 2\zeta + \frac{1}{2} \zeta^2 \right)^\alpha\).
\par
For notational clarity, we refer to the numerical scheme proposed in this paper as \textbf{SFTR-\(\theta\)}, and denote the alternative scheme based on approximation (\ref{t1}) as \textbf{F-BDF-2}.
Define the error at time step \(n\) for \( \boldsymbol{E} \) as \(\text{Error}_{\boldsymbol{E}^n} := \|\boldsymbol{E}^n - \boldsymbol{E}^n_\tau\|\). The corresponding global error in time is then given by \(\text{Error}_{\boldsymbol{E}} := \max_{n} \text{Error}_{\boldsymbol{E}^n}\). The quantities \( \boldsymbol{P} \) and \( H \) are treated analogously.
\par
The numerical results presented in Tables \ref{tab1} and \ref{tab2} validate the temporal convergence properties of two numerical schemes for solving the C-C model: the SFTR-$\theta$ method (Table \ref{tab1}) and the F-BDF-2 method (Table \ref{tab2}). Both tables display the global errors and corresponding convergence rates for the electric field ($\boldsymbol E$), magnetic field (H), and polarization ($\boldsymbol P$) with a fixed spatial discretization \(h = \sqrt{2}/100\) and varying time steps $\tau$.
The data confirm the theoretical convergence orders for both schemes. When $\theta \neq 0.5$, first-order convergence is generally observed, particularly for smaller values of $\alpha$. When $\theta=0.5$, both methods achieve second-order convergence as expected. For $\theta \neq 0.5$, both schemes occasionally exhibit convergence rates exceeding the theoretical first-order prediction, particularly at larger $\alpha$ values.
\begin{table}[]
\centering
\caption{Error and convergence rates for the SFTR-$\theta$ scheme with $h=\frac{\sqrt{2}}{100}$.}\label{tab1}
{\renewcommand{\arraystretch}{1.2}
\begin{tabular}{cclcccccc}
\hline
$\alpha$             & $\theta$              & $\tau$ & $\text{Error}_{\boldsymbol{E}}$ & Rates & $\text{Error}_{H}$ & Rates & \begin{tabular}[c]{@{}c@{}}$\text{Error}_{\boldsymbol{P}}$\end{tabular} & Rates \\ \hline
\multirow{8}{*}{0.1} & \multirow{4}{*}{0.05} & 1/5    & 1.3051E-02                                       &       & 8.6135E-02                &       & 6.4191E-03                                                                                     &       \\
                     &                       & 1/10   & 7.5273E-03                                       & 0.79  & 4.3739E-02                & 0.98  & 3.5507E-03                                                                                     & 0.85  \\
                     &                       & 1/20   & 4.0212E-03                                       & 0.90  & 2.2044E-02                & 0.99  & 1.8611E-03                                                                                     & 0.93  \\
                     &                       & 1/40   & 2.0713E-03                                       & 0.96  & 1.1069E-02                & 0.99  & 9.5010E-04                                                                                     & 0.97  \\ \cline{3-9} 
                     & \multirow{4}{*}{0.5}  & 1/5    & 7.7334E-03                                       &       & 1.2382E-02                &       & 1.2371E-02                                                                                     &       \\
                     &                       & 1/10   & 2.4230E-03                                       & 1.67  & 4.1740E-03                & 1.57  & 4.7133E-03                                                                                     & 1.39  \\
                     &                       & 1/20   & 6.9085E-04                                       & 1.81  & 1.2611E-03                & 1.73  & 1.5101E-03                                                                                     & 1.64  \\
                     &                       & 1/40   & 1.8269E-04                                       & 1.92  & 3.4631E-04                & 1.86  & 4.2948E-04                                                                                     & 1.81  \\ \hline
\multirow{8}{*}{0.5} & \multirow{4}{*}{0.25} & 1/5    & 9.0433E-03                                       &       & 4.6480E-02                &       & 4.6527E-03                                                                                     &       \\
                     &                       & 1/10   & 4.9175E-03                                       & 0.88  & 2.3967E-02                & 0.96  & 1.9206E-03                                                                                     & 1.28  \\
                     &                       & 1/20   & 2.5560E-03                                       & 0.94  & 1.2169E-02                & 0.98  & 8.7310E-04                                                                                     & 1.14  \\
                     &                       & 1/40   & 1.3006E-03                                       & 0.97  & 6.1315E-03                & 0.99  & 4.1816E-04                                                                                     & 1.06  \\ \cline{3-9} 
                     & \multirow{4}{*}{0.5}  & 1/5    & 2.9735E-03                                       &       & 6.0447E-03                &       & 1.0138E-03                                                                                     &       \\
                     &                       & 1/10   & 7.6690E-04                                       & 1.96  & 1.5668E-03                & 1.95  & 3.5522E-04                                                                                     & 1.51  \\
                     &                       & 1/20   & 1.9182E-04                                       & 2.00  & 3.9566E-04                & 1.99  & 1.0197E-04                                                                                     & 1.80  \\
                     &                       & 1/40   & 4.8005E-05                                       & 2.00  & 9.9517E-05                & 1.99  & 2.7168E-05                                                                                     & 1.91  \\ \hline
\multirow{8}{*}{0.9} & \multirow{4}{*}{0.45} & 1/5    & 2.9562E-03                                       &       & 7.1173E-03                &       & 3.8101E-03                                                                                     &       \\
                     &                       & 1/10   & 1.3316E-03                                       & 1.15  & 4.2064E-03                & 0.76  & 9.7433E-04                                                                                     & 1.97  \\
                     &                       & 1/20   & 6.3306E-04                                       & 1.07  & 2.2842E-03                & 0.88  & 2.6404E-04                                                                                     & 1.88  \\
                     &                       & 1/40   & 3.0873E-04                                       & 1.04  & 1.1893E-03                & 0.94  & 8.5076E-05                                                                                     & 1.63  \\ \cline{3-9} 
                     & \multirow{4}{*}{0.5}  & 1/5    & 1.6617E-03                                       &       & 4.1652E-03                &       & 3.1349E-03                                                                                     &       \\
                     &                       & 1/10   & 4.1228E-04                                       & 2.01  & 1.0460E-03                & 1.99  & 7.7408E-04                                                                                     & 2.02  \\
                     &                       & 1/20   & 1.0253E-04                                       & 2.01  & 2.6175E-04                & 2.00  & 1.9240E-04                                                                                     & 2.01  \\
                     &                       & 1/40   & 2.5715E-05                                       & 2.00  & 6.5960E-05                & 1.99  & 4.7989E-05                                                                                     & 2.00  \\ \hline
\end{tabular}
}
\end{table}

\begin{table}[]
\centering
\caption{Error and convergence rates for the F-BDF-2 method with $h=\frac{\sqrt{2}}{100}$.}\label{tab2}
{\renewcommand{\arraystretch}{1.2}
\begin{tabular}{cclcccccc}
\hline
$\alpha$             & $\theta$              & $\tau$ & $\text{Error}_{\boldsymbol{E}}$ & Rates & $\text{Error}_{H}$ & Rates & $\text{Error}_{\boldsymbol{P}}$ & Rates \\ \hline
\multirow{8}{*}{0.1} & \multirow{4}{*}{0.05} & 1/5    & 1.2990E-02                      &       & 8.6118E-02         &       & 6.6458E-03                      &       \\
                     &                       & 1/10   & 7.5131E-03                      & 0.79  & 4.3736E-02         & 0.98  & 3.6152E-03                      & 0.88  \\
                     &                       & 1/20   & 4.0179E-03                      & 0.90  & 2.2044E-02         & 0.99  & 1.8779E-03                      & 0.94  \\
                     &                       & 1/40   & 2.0705E-03                      & 0.96  & 1.1069E-02         & 0.99  & 9.5437E-04                      & 0.98  \\ \cline{3-9} 
                     & \multirow{4}{*}{0.5}  & 1/5    & 2.0542E-03                      &       & 4.7827E-03         &       & 1.6645E-03                      &       \\
                     &                       & 1/10   & 5.0657E-04                      & 2.02  & 1.1957E-03         & 2.00  & 4.3319E-04                      & 1.94  \\
                     &                       & 1/20   & 1.2693E-04                      & 2.00  & 2.9866E-04         & 2.00  & 1.1051E-04                      & 1.97  \\
                     &                       & 1/40   & 3.1827E-05                      & 2.00  & 7.5069E-05         & 1.99  & 2.7951E-05                      & 1.98  \\ \hline
\multirow{8}{*}{0.5} & \multirow{4}{*}{0.25} & 1/5    & 8.5979E-03                      &       & 4.6331E-02         &       & 6.7442E-03                      &       \\
                     &                       & 1/10   & 4.8414E-03                      & 0.83  & 2.3936E-02         & 0.95  & 2.4456E-03                      & 1.46  \\
                     &                       & 1/20   & 2.5424E-03                      & 0.93  & 1.2162E-02         & 0.98  & 9.9213E-04                      & 1.30  \\
                     &                       & 1/40   & 1.2980E-03                      & 0.97  & 6.1302E-03         & 0.99  & 4.4466E-04                      & 1.16  \\ \cline{3-9} 
                     & \multirow{4}{*}{0.5}  & 1/5    & 8.7112E-04                      &       & 3.4754E-03         &       & 5.2863E-03                      &       \\
                     &                       & 1/10   & 1.8965E-04                      & 2.20  & 8.5412E-04         & 2.02  & 1.4162E-03                      & 1.90  \\
                     &                       & 1/20   & 5.1086E-05                      & 1.89  & 2.1277E-04         & 2.01  & 3.6668E-04                      & 1.95  \\
                     &                       & 1/40   & 1.3673E-05                      & 1.90  & 5.3786E-05         & 1.98  & 9.3296E-05                      & 1.97  \\ \hline
\multirow{8}{*}{0.9} & \multirow{4}{*}{0.45} & 1/5    & 2.1597E-03                      &       & 6.6744E-03         &       & 9.7369E-03                      &       \\
                     &                       & 1/10   & 1.2583E-03                      & 0.78  & 4.1196E-03         & 0.70  & 2.5996E-03                      & 1.91  \\
                     &                       & 1/20   & 6.3639E-04                      & 0.98  & 2.2642E-03         & 0.86  & 6.7628E-04                      & 1.94  \\
                     &                       & 1/40   & 3.1195E-04                      & 1.03  & 1.1846E-03         & 0.93  & 1.7900E-04                      & 1.92  \\ \cline{3-9} 
                     & \multirow{4}{*}{0.5}  & 1/5    & 9.4455E-04                      &       & 3.7415E-03         &       & 9.7657E-03                      &       \\
                     &                       & 1/10   & 3.1100E-04                      & 1.60  & 9.7887E-04         & 1.93  & 2.6054E-03                      & 1.91  \\
                     &                       & 1/20   & 8.7175E-05                      & 1.83  & 2.4558E-04         & 1.99  & 6.7213E-04                      & 1.95  \\
                     &                       & 1/40   & 2.3919E-05                      & 1.87  & 6.1911E-05         & 1.99  & 1.7068E-04                      & 1.98  \\ \hline
\end{tabular}
}
\end{table}

\subsection{C-C model without source terms}
In this example, let $\Omega=(0,1)^2$, $T=1$, and set parameter values $\epsilon_0\epsilon_\infty=\mu_0=\tau_0=\epsilon_0(\epsilon_s-\epsilon_\infty)=1$.
Choose the following initial conditions for the initial C-C model (\ref{a1})-(\ref{a3}):
\begin{equation}\label{N.2}\begin{split}
\boldsymbol E_0(x,y)&=
\begin{pmatrix}
  (x^2+1)\sin(\pi y) \\
  \sin(\pi x) (y-\frac{1}{2})
\end{pmatrix},\quad
\boldsymbol P_0(x,y)=\boldsymbol 0,\quad
H_0(x,y)=(x^3+1)(y^3+1).
\end{split}
\end{equation}

Figs. \ref{C2} and \ref{C3} present numerical experiments investigating the energy-decay properties of the SFTR-$\theta$ scheme and F-BDF-2 method.
Fig. \ref{C2} demonstrates that the SFTR-$\theta$ scheme maintains strict monotonic energy-decay across various parameter values. Subfigure (a) shows consistent decay for $\alpha = 0.5$ with $\theta = 0.3, 0.4, 0.5$, while subfigure (b) confirms this property for $\theta = 0.5$ with $\alpha$ ranging from 0.1 to 0.9.
A smaller value of $\alpha$ leads to faster energy-decay in the vicinity of the initial time.
Fig.  \ref{C3}  provides comparative analysis for $\theta = 0.5$ at different $\alpha$ values $(0.2, 0.5, 0.8, 0.99)$. 
The SFTR-$\theta$ scheme preserves the discrete energy dissipation property $\widetilde{\mathcal{E}}_\alpha^n \leq
\widetilde{\mathcal{E}}_\alpha^{n-1}$ in all cases. In contrast, the F-BDF-2 method exhibits non-physical energy oscillations and fails to maintain monotonic decay.
\par
This distinction highlights the SFTR-$\theta$ scheme's superior capability in preserving the dissipative structure of the continuous model, ensuring numerical stability and physical fidelity in long-term simulations.

\begin{figure}[htbp]
\centering
\subfigure[]{
\begin{minipage}[t]{0.5\linewidth}
\centering
\includegraphics[width=1\textwidth]{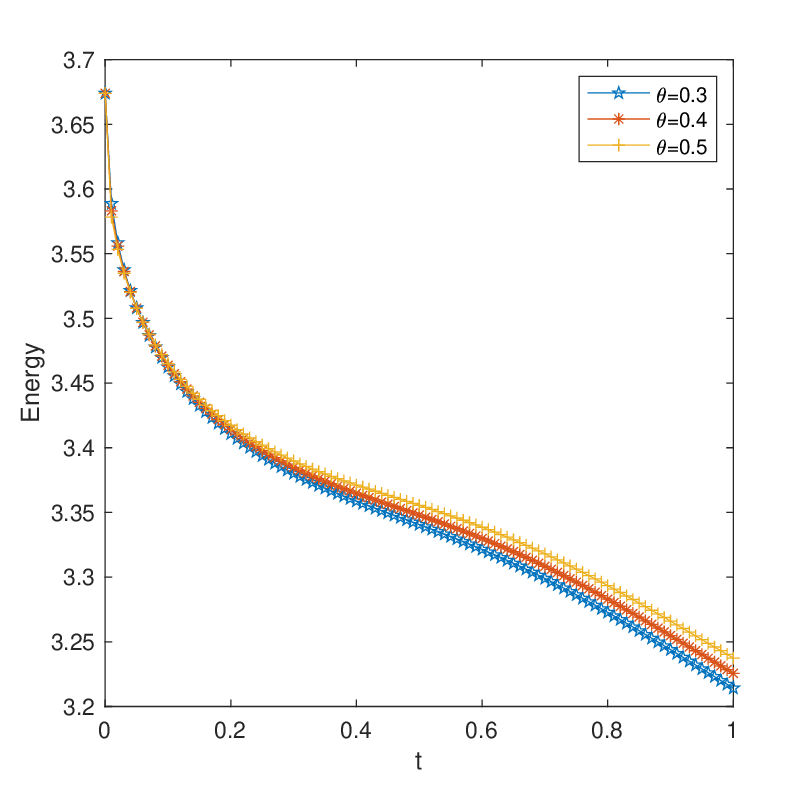}
\end{minipage}%
}%
\subfigure[]{
\begin{minipage}[t]{0.5\linewidth}
\centering
\includegraphics[width=1\textwidth]{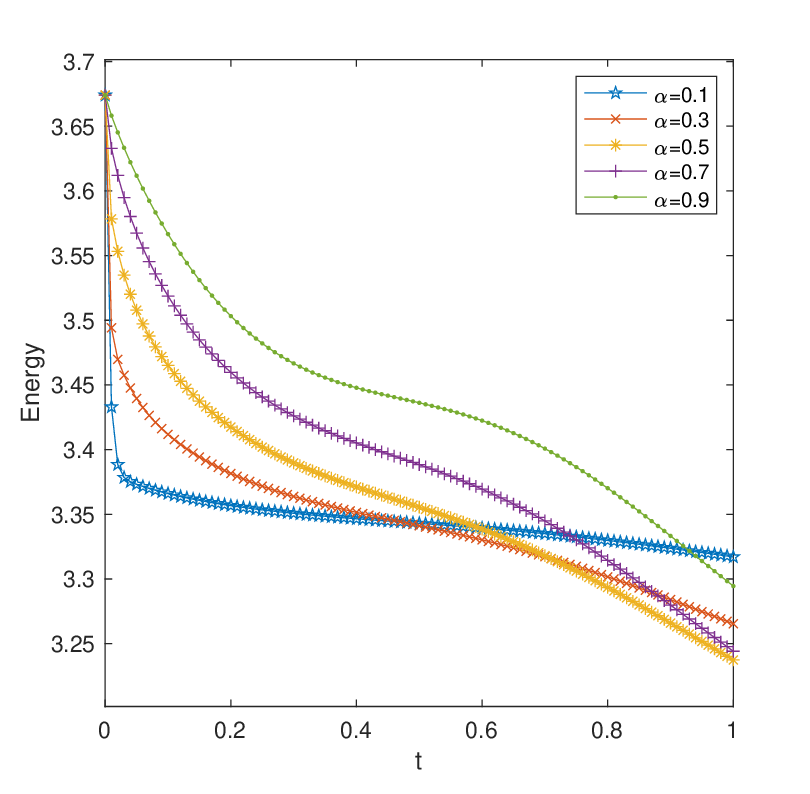}
\end{minipage}%
}%
\centering
\caption{Illustration of the discrete energy-decay property of the SFTR-\(\theta\) scheme with \(h = \sqrt{2}/60\) and \(\tau = 0.01\). (a) Energy evolution for \(\alpha = 0.5\) with varying \(\theta = 0.3, 0.4, 0.5\); (b) Energy evolution for \(\theta = 0.5\) with varying \(\alpha = 0.1, 0.3, 0.5, 0.7, 0.9\).}\label{C2}
\end{figure}

\begin{figure}[htbp]
\centering
\subfigure[]{
\begin{minipage}[t]{0.5\linewidth}
\centering
\includegraphics[width=1\textwidth]{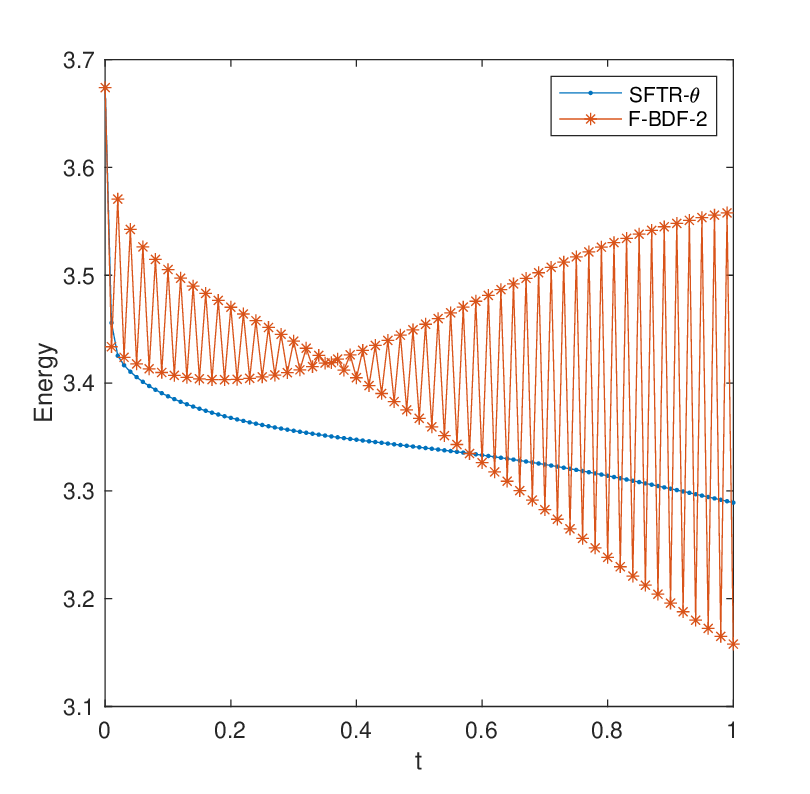}
\end{minipage}%
}%
\subfigure[]{
\begin{minipage}[t]{0.5\linewidth}
\centering
\includegraphics[width=1\textwidth]{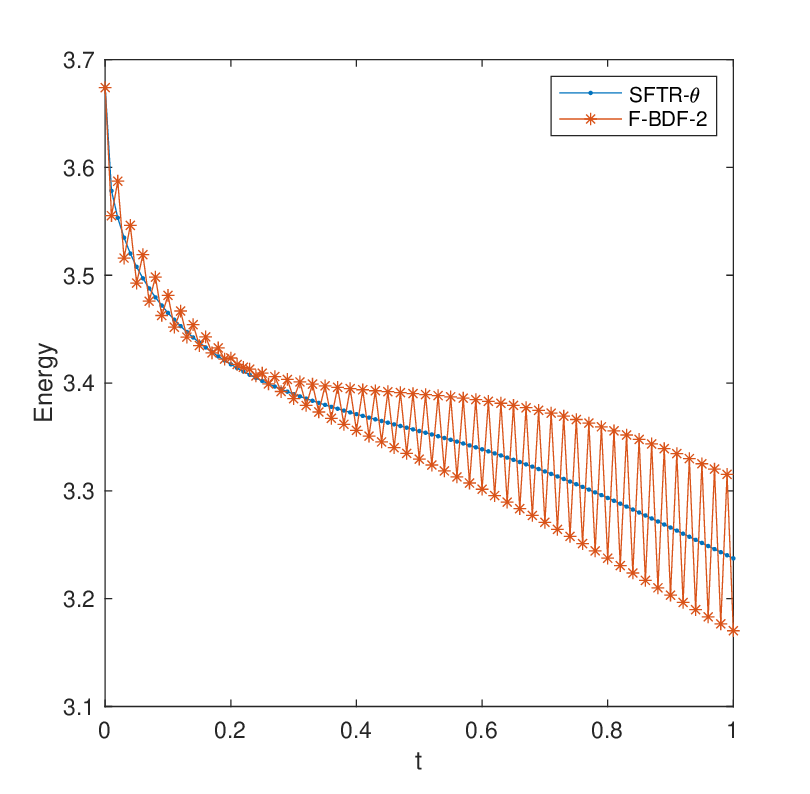}
\end{minipage}%
}%
\\
\subfigure[]{
\begin{minipage}[t]{0.5\linewidth}
\centering
\includegraphics[width=1\textwidth]{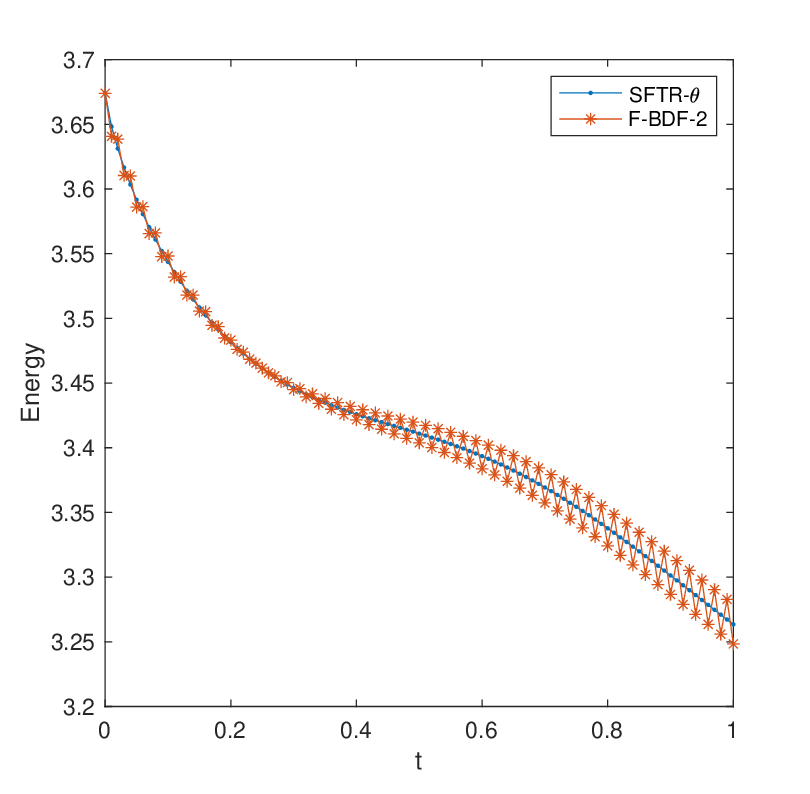}
\end{minipage}%
}%
\subfigure[]{
\begin{minipage}[t]{0.5\linewidth}
\centering
\includegraphics[width=1\textwidth]{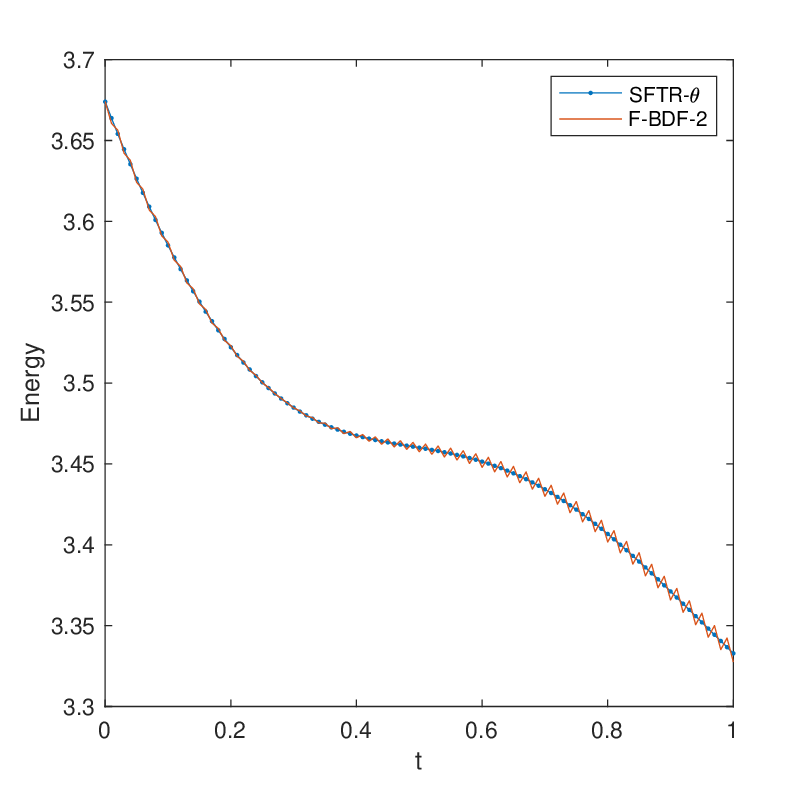}
\end{minipage}%
}%
\centering
\caption{Comparison of the discrete energy-decay between the SFTR-\(\theta\) scheme and the F-BDF-2 method, with parameters \(h = \sqrt{2}/60\), \(\tau = 0.01\), and \(\theta = 0.5\): (a) \(\alpha = 0.2\); (b) \(\alpha = 0.5\); (c) \(\alpha = 0.8\); (d) \(\alpha = 0.99\).}\label{C3}
\end{figure}
\section{Conclusion}
In this work, we have systematically studied the energy dissipation properties of the Cole–Cole model and developed a class of temporal discretization schemes that preserve discrete energy-decay. 
The main contributions are summarized as follows:
\begin{itemize}
\item[-] A modified energy functional was introduced, and a rigorous proof of the energy-decay law for the continuous C-C model was established.
\item[-] A $\theta$-scheme (SFTR-$\theta$) was designed and analyzed, with a theoretical proof that it preserves the discrete energy-decay property under appropriate parameter choices.
\item[-] A detailed error analysis confirmed that the scheme achieves first-order convergence for \(\theta \neq 0.5\) and second-order convergence for \(\theta = 0.5\).
\item[-] Numerical tests verified the theoretical convergence rates and demonstrated the unconditional energy-decay behavior of the SFTR-$\theta$ scheme. In contrast, the F-BDF-2 method failed to preserve monotonic energy-decay in various settings.
\end{itemize}
\par
The proposed scheme not only aligns with the physical dissipation structure of the C-C model but also provides a reliable foundation for adaptive time-stepping strategies in long-time simulations. Future work may extend this approach to more complex dispersive models such as the Havriliak-Negami model.
\section*{Declaration of competing interest}
The authors declare no competing interests.

\section*{Acknowledgements}
This work is supported by the National Natural Science Foundation of China (No. 12201322 to B.Y., No. 12401530 to G.Y., No. 12461080 to Y.L and No. 12561068 to H.L.), 
Scientific Research Project of Higher Education Institutions of Inner Mongolia Autonomous Region (No. NJZY23003 to Z. D.), 
Natural Science Foundation of Inner Mongolia (No. 2025MS01003 to B.Y.),
Program for Innovative Research Team in Universities of Inner Mongolia Autonomous Region (No. NMGIRT2413 to Y.L.),
Key Project of Natural Science Foundation of Inner Mongolia Autonomous Region (No. 2025ZD036 to H.L.).

\bibliography{mybibfile}

@article{yin2020necessity,
  title={{Necessity of introducing non-integer shifted parameters by constructing high accuracy finite difference algorithms for a two-sided space-fractional advection--diffusion model}},
  author={Yin, Baoli and Liu, Yang and Li, Hong},
  journal={Appl. Math. Lett.},
  volume={105},
  pages={106347},
  year={2020},
  publisher={Elsevier}
}

@article{yin2021efficient,
  title={{Efficient shifted fractional trapezoidal rule for subdiffusion problems with nonsmooth solutions on uniform meshes}},
  author={Yin, Baoli and Liu, Yang and Li, Hong and Zhang, Zhimin},
  journal={BIT Numer. Math.},
  pages={1--36},
  year={2021},
  publisher={Springer}
}

@article{liao2019discrete,
  title={{A discrete gronwall inequality with applications to numerical schemes for subdiffusion problems}},
  author={Liao, Hong-lin and McLean, William and Zhang, Jiwei},
  journal={SIAM J. Numer. Anal.},
  volume={57},
  number={1},
  pages={218--237},
  year={2019},
  publisher={SIAM}
}

@article{liu2019unified,
  title={{The unified theory of shifted convolution quadrature for fractional calculus}},
  author={Liu, Yang and Yin, Baoli and Li, Hong and Zhang, Zhimin},
  journal={J. Sci. Comput.},
  volume={89},
  number={1},
  pages={1--24},
  year={2021},
  publisher={Springer}
}

@book{diethelm2010analysis,
  title={{The analysis of fractional differential equations: An application-oriented exposition using differential operators of Caputo type}},
  author={Diethelm, Kai},
  year={2010},
  publisher={Springer Science \& Business Media}
}

@article{alsaedi2015maximum,
  title={{Maximum principle for certain generalized time and space fractional diffusion equations}},
  author={Alsaedi, Ahmed and Ahmad, Bashir and Kirane, Mokhtar and Mostefaoui, IM},
  journal={Quart. Appl. Math},
  volume={73},
  number={1},
  pages={163--175},
  year={2015}
}

@article{liao2021energy,
  title={{An Energy Stable and Maximum Bound Preserving Scheme with Variable Time Steps for Time Fractional Allen--Cahn Equation}},
  author={Liao, Hong-lin and Tang, Tao and Zhou, Tao},
  journal={SIAM J. Sci. Comput.},
  volume={43},
  number={5},
  pages={A3503--A3526},
  year={2021},
  publisher={SIAM}
}

@article{baoli2023discrete,
  title={{On discrete energy dissipation of Maxwell's equations in a Cole-Cole dispersive medium}},
  author={Yin, Baoli and Liu, Yang and Li, Hong and Zhang, Zhimin},
  journal={J. Comput. Math.},
  volume={41},
  number={5},
  pages={980--1002},
  year={2023},
  publisher={Zhongguo Kexueyuan}
}

@article{lubich1986discretized,
  title={{Discretized fractional calculus}},
  author={Lubich, Ch},
  journal={SIAM J. Math. Anal.},
  volume={17},
  number={3},
  pages={704--719},
  year={1986},
  publisher={SIAM}
}

@article{li2011developing,
  title={{Developing finite element methods for Maxwell's equations in a Cole--Cole dispersive medium}},
  author={Li, Jichun and Huang, Yunqing and Lin, Yanping},
  journal={SIAM J. Sci. Comput.},
  volume={33},
  number={6},
  pages={3153--3174},
  year={2011},
  publisher={SIAM}
}

@incollection{biswas2017fractional,
  title={{Fractional-order models of vegetable tissues}},
  author={Biswas, Karabi and Bohannan, Gary and Caponetto, Riccardo and Mendes Lopes, Ant{\'o}nio and Tenreiro Machado, Jos{\'e} Ant{\'o}nio},
  booktitle={Fractional-Order Devices},
  pages={73--92},
  year={2017},
  publisher={Springer}
}

@book{polk1995handbook,
  title={{Handbook of Biological Effects of Electromagnetic Fields, -2 Volume Set}},
  author={Polk, Charles and Postow, Elliot},
  year={1995},
  publisher={CRC press}
}

@article{repo1996application,
  title={{Application of impedance spectroscopy for selecting frost hardy varieties of English ryegrass}},
  author={Repo, Tapani and Pulli, Seppo},
  journal={Ann. Bot.},
  volume={78},
  number={5},
  pages={605--609},
  year={1996},
  publisher={Elsevier}
}

@article{ziolkowski2001wave,
  title={{Wave propagation in media having negative permittivity and permeability}},
  author={Ziolkowski, Richard W and Heyman, Ehud},
  journal={Phys. Rev. E},
  volume={64},
  number={5},
  pages={056625},
  year={2001},
  publisher={APS}
}

@article{smith2000negative,
  title={{Negative refractive index in left-handed materials}},
  author={Smith, David R and Kroll, Norman},
  journal={Phys. Rev. Lett.},
  volume={85},
  number={14},
  pages={2933},
  year={2000},
  publisher={APS}
}

@inproceedings{havriliak1966complex,
  title={{A complex plane analysis of $\alpha$-dispersions in some polymer systems}},
  author={Havriliak, S and Negami, S},
  booktitle={Journal of Polymer Science Part C: Polymer Symposia},
  volume={14},
  number={1},
  pages={99--117},
  year={1966},
  organization={Wiley Online Library}
}

@article{havriliak1967complex,
  title={{A complex plane representation of dielectric and mechanical relaxation processes in some polymers}},
  author={Havriliak, S and Negami, S},
  journal={Polymer},
  volume={8},
  pages={161--210},
  year={1967},
  publisher={Elsevier}
}

@article{cole1941dispersion,
  title={{Dispersion and absorption in dielectrics I. Alternating current characteristics}},
  author={Cole, Kenneth S and Cole, Robert H},
  journal={J. Chem. Phys.},
  volume={9},
  number={4},
  pages={341--351},
  year={1941},
  publisher={American Institute of Physics}
}

@article{taflove2005computational,
  title={{Computational electromagnetics: the finite-difference time-domain method}},
  author={Taflove, Allen and Hagness, Susan C and Piket-May, Melinda},
  journal={The Electrical Engineering Handbook},
  volume={3},
  number={629-670},
  pages={15},
  year={2005},
  publisher={Elsevier Amsterdam, The Netherlands}
}

@article{causley2011incorporating,
  title={{Incorporating the Havriliak--Negami dielectric model in the FD-TD method}},
  author={Causley, Matthew F and Petropoulos, Peter G and Jiang, Shidong},
  journal={J. Comput. Phys.},
  volume={230},
  number={10},
  pages={3884--3899},
  year={2011},
  publisher={Elsevier}
}

@article{rekanos2010auxiliary,
  title={{An auxiliary differential equation method for FDTD modeling of wave propagation in Cole-Cole dispersive media}},
  author={Rekanos, Ioannis T and Papadopoulos, Theseus G},
  journal={IEEE Trans. Antennas Propag.},
  volume={58},
  number={11},
  pages={3666--3674},
  year={2010},
  publisher={IEEE}
}

@inproceedings{schuster1998fdtd,
  title={{An FDTD algorithm for transient propagation in biological tissue with a Cole-Cole dispersion relation}},
  author={Schuster, JW and Luebbers, RJ},
  booktitle={IEEE Antennas and Propagation Society International Symposium. 1998 Digest. Antennas: Gateways to the Global Network. Held in conjunction with: USNC/URSI National Radio Science Meeting (Cat. No. 98CH36},
  volume={4},
  pages={1988--1991},
  year={1998},
  organization={IEEE}
}

@article{torres1996application,
  title={{Application of fractional derivatives to the FDTD modeling of pulse propagation in a Cole--Cole dispersive medium}},
  author={Torres, Fran{\c{c}}ois and Vaudon, Patrick and Jecko, Bernard},
  journal={Microw. Opt. Technol. Lett.},
  volume={13},
  number={5},
  pages={300--304},
  year={1996},
  publisher={Wiley Online Library}
}

@inproceedings{jiao2001time,
  title={{Time-domain finite element modeling of dispersive media}},
  author={Jiao, Dan and Jin, Jian-Ming},
  booktitle={IEEE Antennas and Propagation Society International Symposium. 2001 Digest. Held in conjunction with: USNC/URSI National Radio Science Meeting (Cat. No. 01CH37229)},
  volume={3},
  pages={180--183},
  year={2001},
  organization={IEEE}
}

@article{banks2009analysis,
  title={{Analysis of stability and dispersion in a finite element method for Debye and Lorentz dispersive media}},
  author={Banks, Harvey Thomas and Bokil, VA and Gibson, Nathan Louis},
  journal={Numer. Methods Partial Differ. Equ.},
  volume={25},
  number={4},
  pages={885--917},
  year={2009},
  publisher={Wiley Online Library}
}

@article{li2006analysis,
  title={{Analysis of a time-domain finite element method for 3-D Maxwell’s equations in dispersive media}},
  author={Li, Jichun and Chen, Yitung},
  journal={Comput. Methods Appl. Mech. Eng.},
  volume={195},
  number={33-36},
  pages={4220--4229},
  year={2006},
  publisher={Elsevier}
}

@article{yang2021analysis,
  title={{Analysis of a backward Euler-type scheme for Maxwell’s equations in a Havriliak--Negami dispersive medium}},
  author={Yang, Yubo and Wang, Li-Lian and Zeng, Fanhai},
  journal={ESAIM: Math. Model. Numer. Anal.},
  volume={55},
  number={2},
  pages={479--506},
  year={2021},
  publisher={EDP Sciences}
}

@article{huang2019accurate,
  title={{An accurate spectral method for the transverse magnetic mode of Maxwell equations in Cole-Cole dispersive media}},
  author={Huang, Can and Wang, Li-Lian},
  journal={Adv. Comput. Math.},
  volume={45},
  number={2},
  pages={707--734},
  year={2019},
  publisher={Springer}
}

@article{wang2021cg,
  title={{A CG-DG method for Maxwell’s equations in Cole-Cole dispersive media}},
  author={Wang, Jiangxing and Zhang, Jiwei and Zhang, Zhimin},
  journal={J. Comput. Appl. Math.},
  volume={393},
  pages={113480},
  year={2021},
  publisher={Elsevier}
}

@article{lu2004discontinuous,
  title={{Discontinuous Galerkin methods for dispersive and lossy Maxwell's equations and PML boundary conditions}},
  author={Lu, Tiao and Zhang, Pingwen and Cai, Wei},
  journal={J. Comput. Phys.},
  volume={200},
  number={2},
  pages={549--580},
  year={2004},
  publisher={Elsevier}
}

@article{chakarothai2018novel,
  title={{Novel FDTD scheme for analysis of frequency-dependent medium using fast inverse Laplace transform and Prony’s method}},
  author={Chakarothai, Jerdvisanop},
  journal={IEEE Trans. Antennas Propag.},
  volume={67},
  number={9},
  pages={6076--6089},
  year={2018},
  publisher={IEEE}
}

@article{mescia2014fractional,
  title={{Fractional derivative based FDTD modeling of transient wave propagation in Havriliak--Negami media}},
  author={Mescia, Luciano and Bia, Pietro and Caratelli, Diego},
  journal={IEEE Trans. Microw. Theory Tech.},
  volume={62},
  number={9},
  pages={1920--1929},
  year={2014},
  publisher={IEEE}
}

@article{antonopoulos2017fdtd,
  title={{FDTD method for wave propagation in Havriliak--Negami media based on fractional derivative approximation}},
  author={Antonopoulos, Christos S and Kantartzis, Nikolaos V and Rekanos, Ioannis T},
  journal={IEEE Trans. Magn.},
  volume={53},
  number={6},
  pages={1--4},
  year={2017},
  publisher={IEEE}
}

@article{rekanos2012fdtd,
  title={{FDTD modeling of Havriliak-Negami media}},
  author={Rekanos, Ioannis T},
  journal={IEEE Microw. Wirel. Compon. Lett.},
  volume={22},
  number={2},
  pages={49--51},
  year={2012},
  publisher={IEEE}
}

@article{rekanos2011fdtd,
  title={{FDTD schemes for wave propagation in Davidson-Cole dispersive media using auxiliary differential equations}},
  author={Rekanos, Ioannis T},
  journal={IEEE Trans. Antennas Propag.},
  volume={60},
  number={3},
  pages={1467--1478},
  year={2011},
  publisher={IEEE}
}

@article{ionescu2017role,
  title={{The role of fractional calculus in modeling biological phenomena: A review}},
  author={Ionescu, C and Lopes, A and Copot, Dana and Machado, JA Tenreiro and Bates, Jason HT},
  journal={Commun. Nonlinear Sci. Numer. Simul.},
  volume={51},
  pages={141--159},
  year={2017},
  publisher={Elsevier}
}

@article{mustafa2014modeling,
  title={{Modeling human head tissues using fourth-order Debye model in convolution-based three-dimensional finite-difference time-domain}},
  author={Mustafa, Samah and Abbosh, Amin M and Nguyen, Phong Thanh},
  journal={IEEE Trans. Antennas Propag.},
  volume={62},
  number={3},
  pages={1354--1361},
  year={2014},
  publisher={IEEE}
}

@article{liu2023insight,
  title={{Insight into the significance of absorbing boundary condition for the flow mechanism analysis of fractional Maxwell fluid over a semi-infinite plate}},
  author={Liu, Lin and Chen, Siyu and Yang, Jingyu and Zhang, Sen and Feng, Libo and Si, Xinhui and Zheng, Liancun},
  journal={Phys. Fluids},
  volume={35},
  number={5},
  year={2023},
  publisher={AIP Publishing}
}

@article{xiao2025unconditionally,
  title={An unconditionally stable second-order scheme for Maxwell's equations in the Cole--Cole dispersive medium},
  author={Xiao, Jingjing and Kong, Desong},
  journal={Applied Numerical Mathematics},
  volume={211},
  pages={211--227},
  year={2025},
  publisher={Elsevier}
}

\end{document}